\documentclass[a4paper,12pt]{amsart}
\usepackage{amsmath,amssymb,amsthm}    
\usepackage{comment}
\usepackage{ulem}
\usepackage{tikz}
\usepackage{mathdots}
\usepackage{tikz}
\usepackage{float}
\usepackage{subcaption}
\usepackage[subrefformat=parens,labelformat=parens]{subcaption}
\usetikzlibrary{calc,positioning,decorations.pathmorphing,decorations.pathreplacing}
\tikzset{emp/.style={double distance = 0.3ex}}

\tikzset{M edge/.style={line width=1.3pt,double distance=1.1pt}}
\tikzset{F1 edge/.style={line width=1.3,color=red,->}}
\tikzset{F2 edge/.style={line width=1.3,color=blue,->}}
\tikzset{E edge/.style={line width=1.3,color=black,-}}

\tikzset{squared black vertex/.style={draw,minimum size=2mm,inner sep=0pt,outer sep=3pt,fill=black, color=black}}
\tikzset{red vertex/.style={circle,draw,minimum size=2mm,inner sep=0pt,outer sep=2pt,fill=red, color=red}}
\tikzset{blue vertex/.style={circle,draw,minimum size=2mm,inner sep=0pt,outer sep=2pt,fill=blue, color=blue}}
\tikzset{black vertex/.style={circle,draw,minimum size=2mm,inner sep=0pt,outer sep=2pt,fill=black, color=black}}
\tikzset{small black vertex/.style={circle,draw,minimum size=1.2mm,inner sep=0pt,outer sep=1.2pt,fill=black, color=black}}
\tikzset{small white vertex/.style={circle,draw,minimum size=1.2mm,inner sep=0pt,outer sep=1.2pt,color=black,fill=white}}
\tikzset{square vertex/.style={draw,minimum size=1.2mm,inner sep=0pt,outer sep=1.2pt,fill=black, color=red}}
\tikzset{white vertex/.style={circle,draw,minimum size=2mm,inner sep=0pt,outer sep=3pt,color=black,fill=white}}

\tikzset{fatpath/.style={line width=9pt,rounded corners=.1mm}}

\tikzstyle{edge}=[line width=1.3]

\tikzstyle{color1}=[color=blue] 
\tikzstyle{color2}=[color=red]
\tikzstyle{color3}=[color=green] 
\tikzstyle{color4}=[fill=yellow]
\tikzstyle{color5}=[ dashed] 

\tikzstyle{backcolor1}=[color=gray!55!white] 
\tikzstyle{backcolor2}=[color=blue!35!white] 

\usepackage{standalone}
\usepackage{mathtools}

\DeclareUnicodeCharacter{2212}{\ensuremath{-}}

\usepackage{enumitem}
\newcommand{\etal}{\textit{et~al.}}
\usepackage{amsaddr}

\usepackage[T1]{fontenc}
\usepackage[english]{babel}

\usepackage{fullpage}

\newtheorem{theorem}             {Theorem}
\newtheorem{lemma}     	[theorem] {Lemma}        
\newtheorem{conjecture}	[theorem] {Conjecture}

\newtheorem{proposition}[theorem] {Proposition}   
\newtheorem{corollary}	[theorem] {Corollary}
     
\newtheorem{claim}	[theorem] {Claim}

\usepackage{diagbox}
\sloppy

\newtheoremstyle{case}{}{}{}{}{\bfseries}{:}{ }{}
\theoremstyle{case}
\newtheorem{case}{Case}

\title{On Tuza's conjecture in dense graphs} 
\thanks{
  J. Gutiérrez and L. Chahua were partially supported by Fondo Semilla UTEC 871075-2022.
  }

\author{Luis Chahua}
\address{\vspace{-4mm}Departamento de Ciencia de la Computación\\ 
Universidad de Ingeniería y Tecnología (UTEC), Perú}
\email{luis.chahua@utec.edu.pe}

\author{Juan Gutiérrez} 
\address{\vspace{-4mm}Departamento de Ciencia de la Computación\\ 
Universidad de Ingeniería y Tecnología (UTEC), Perú}
\email{jgutierreza@utec.edu.pe}

\linespread{1.2}

\usepackage{lineno}

\begin{document}
\normalem

\maketitle

\begin{abstract}
In 1982, Tuza conjectured that the size~\(\tau(G)\) of a minimum set of
	edges that intersects every triangle of a graph~\(G\) is at most twice the size~\(\nu(G)\) 
	of a maximum set of edge-disjoint triangles of~\(G\).
	This conjecture was proved for several graph classes.
	In this paper, we present three results regarding Tuza's Conjecture for dense graphs. 
By using a probabilistic argument, Tuza proved its conjecture for graphs on $n$ vertices with minimum degree at least~$\frac{7n}{8}$.
We extend this technique to show that
Tuza's conjecture is valid for split graphs with minimum degree at least $\frac{3n}{5}$; and that~\(\tau(G) < \frac{28}{15}\nu(G)\) for every tripartite graph with minimum degree more than $\frac{33n}{56}$.
Finally, we show that~\({\tau(G)\leq \frac{3}{2}\nu(G)}\) when~$G$ is a complete 4-partite graph.
	Moreover, this bound is tight.
\end{abstract}

\maketitle
\section{Introduction and Preliminaries}\label{sec:intro}

In this paper, all graphs considered are simple and the notation and terminology are standard \cite{BondyM08,Diestel10}.
A \textit{triangle hitting}
of a graph~$G$ is a set of edges of~$G$ whose removal results in a triangle-free graph; and a \textit{triangle packing} of~$G$ is a set of pairwise
edge-disjoint triangles of~$G$. We denote by~$\tau(G)$ (resp.~$\nu(G)$) the cardinality of a minimum triangle hitting (resp. maximum
triangle packing) of~$G$. In 1981, Tuza posed the following conjecture.

\begin{conjecture}[\cite{Tuza81}]\label{conjecture:tuza}
  For every graph~\(G\), we have~\(\tau(G)\leq 2\,\nu(G)\).
\end{conjecture}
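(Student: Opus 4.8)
The plan is to attack the inequality $\tau(G)\le 2\nu(G)$ through its linear-programming relaxation and then to control the integrality gap. First I would pass to the LP duals: the value $\tau^*(G)$ of a fractional triangle hitting equals the value $\nu^*(G)$ of a fractional triangle packing by duality, so the conjecture is trivially true at the fractional level. Moreover the classical one-sided bounds $\tau(G)\le 2\nu^*(G)$ and $\tau^*(G)\le 2\nu(G)$ are already available, so both relaxations meet the target constant $2$. The crux of the whole program is therefore to round a fractional cover of weight at most $2\nu^*(G)=2\nu(G)$ down to an honest integral hitting of comparable size, i.e.\ to show that the integrality gap $\tau(G)/\nu(G)$ never exceeds $2$.

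Second, I would reduce to an extremal configuration. Suppose $G$ is a counterexample with $\tau(G)>2\nu(G)$ minimizing $|V(G)|+|E(G)|$. In such a minimal counterexample every edge lies in a triangle and $G$ is triangle-critical in a suitable sense; the goal is then to locate a local reducible substructure — a single triangle $T$, a copy of $K_4$, or a small dense piece — whose deletion lowers $\nu$ by exactly $1$ while lowering $\tau$ by at most $2$, thereby closing the induction. Assembling a finite list of such configurations that provably covers every critical graph would finish the proof.

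The main obstacle is that this last step has no known general solution: the statement is in fact the famous open Tuza conjecture, and the best unconditional bound remains $\tau(G)\le\bigl(3-\tfrac{3}{23}\bigr)\nu(G)\approx 2.87\,\nu(G)$ rather than $2\nu(G)$. Both the rounding route and the reducibility route fail on sparse, triangle-rich gadgets where the fractional and integral optima diverge, so no exhaustive list of reducible configurations is known. This is exactly why the fruitful strategy — and the one this paper pursues — is to impose structural constraints, namely high minimum degree or a fixed multipartite skeleton, under which the missing edges force enough local regularity to run either Tuza's probabilistic covering argument or a deletion-and-recurse scheme and so recover the factor $2$ (and in the complete multipartite case even improve it).
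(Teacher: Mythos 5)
You have not produced a proof, and you say so yourself --- which is the correct assessment here. The statement you were given is labelled \emph{Conjecture} in the paper: it is Tuza's conjecture from 1981, still open, and the paper contains no proof of it to compare against. The paper only establishes special cases (dense split graphs, dense tripartite graphs, complete $4$-partite graphs), each under explicit density or completeness hypotheses, using Lemma~\ref{lemma:main} (the probabilistic permutation argument) and Lemma~\ref{lemma:extendpacking} (extending edge colorings to packings). So no blind attempt could legitimately close this statement, and your honesty about that is the right outcome.

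On the substance of your sketch: the facts you invoke are accurate. LP duality does give $\tau^*(G)=\nu^*(G)$; Krivelevich's two half-integral results $\tau(G)\leq 2\nu^*(G)$ and $\tau^*(G)\leq 2\nu(G)$ are indeed known; and the best unconditional bound is Haxell's $\tau(G)\leq \bigl(3-\tfrac{3}{23}\bigr)\nu(G)\approx 2.87\,\nu(G)$, cited in the paper's introduction. The genuine gap is exactly where you locate it: neither the rounding of a fractional cover nor the minimal-counterexample/reducible-configuration scheme is known to close the integrality gap to $2$; in particular, no finite list of reducible configurations covering all critical graphs exists, and both routes break on sparse triangle-rich gadgets. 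Your final paragraph also correctly diagnoses why the paper instead imposes structural hypotheses (minimum degree at least $\tfrac{3n}{5}$ for split graphs, density $m>\tfrac{n^2}{4}$ for tripartite graphs, completeness for $4$-partite graphs) so that Tuza-style probabilistic counting over permutations can be run. The only caution: your sketch should not be mistaken for partial progress --- it is a survey of known reductions plus a statement of the open problem, and any referee should treat the statement as unproved.
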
%

Haxell \etal~\cite{Haxell99} showed the first and unique nontrivial bound to Tuza’s Conjecture. She showed
that~$\tau(G) \leq 2.87 \nu(G)$ for every graph~$G$.
Tuza showed his conjecture for planar graphs \cite{Tuza90}. Cui \etal~\cite{Cui09} characterized planar graphs for which Tuza’s Conjecture is tight. Haxell \etal~\cite{Haxell12} showed that, when~$G$ is a~$K_4$-free planar graph, the stronger inequality~$\tau(G) \leq \frac{3}{2} \nu(G)$ holds.
Botler \etal~\cite{Botler2020} showed the same bound for planar triangulations.
The purpose of this paper is to study Tuza's conjecture for dense graphs.
In this direction, by using a probabilistic argument, Tuza proved his conjecture for graphs on~$n$ vertices and at least~$\frac{7}{16}n^2$ edges \cite{Tuza90}. By extending this technique, we show new results for the classes of split graphs 
and tripartite graphs. Finally, we show a tight result for complete 4-partite graphs. We next state what our results are and how they are related to the current literature.

 Botler \etal~proved Tuza's conjecture for~$K_8$-free chordal graphs \cite[Corollary 3.6]{Botler2020}. But the conjecture is still open for several other important subclasses of chordal graphs, as split graphs. In this direction, Bonamy~\etal~verified this conjecture for threshold graphs~\cite{Bonamy2022}, that is, graphs that are both split and cographs.

Our first result is the following.

\begin{itemize}
\item For every split graph on~$n$ vertices with minimum degree at least~$\frac{3n}{5}$, Tuza's conjecture holds.
\end{itemize}

For tripartite graphs, Haxell and Kohayakawa \cite{Haxell98} showed that~$\tau(G) \leq 1.956\nu(G)$. This bound was improved by Szestopalow {\cite[Theorem 4.1.5]{Sze16}}. He showed that~$\tau(G) \leq 1.87\nu(G)$.
Regarding tripartite graphs, we prove the following.

\begin{itemize}
\item For every tripartite graph~$G$ on~$n$ vertices and~$m>\frac{n^2}{4}$ edges,~${\tau(G) \leq \frac{n^2}{3(4m-n^2)} \cdot \nu(G)}$, which implies that~$\tau(G) < 1.8 \nu(G)$ if~$G$ has minimum degree at least~$0.59n$. 
\end{itemize}

Aparna \etal~\cite[Corollary 7]{Aparna11} showed that Tuza's Conjecture holds for
4-partite graphs.
We improve this result for complete 4-partite graphs by showing the following tight upper bound.
\begin{itemize}
\item 
For every complete 4-partite graph~$G$ on at least 5 vertices,
$\tau(G) \leq \frac{3}{2} \nu(G)$.
\end{itemize}

We begin by introducing some notation.
For a graph~$G$ and~$X \subseteq V(G)$, we denote by~$G[X]$ the subgraph induced by~$X$. Also, for $v \in V(G)$, $N(v) := \{u : uv \in E(G)\}$ is called the \textit{neighborhood} of $v$ and $N[v] := N(v) \cup \{v\}$ is its \textit{closed neighborhood}. 
We denote the complete graph on~$n$ vertices by~$K_{n}$. Also, we denote by~$d_{G}(v)$ the degree of~$v$ in a given graph~$G$. If the context is clear, we write~$d(v)$. The maximum degree of a graph $G$
is denoted by $\Delta(G)$ and the minimum degree by $\delta(G)$.

A graph~$G$ is called a~$(k,\ell)$-$\textit{graph}$ if~$V(G)$ has a partition~$\{X_1,X_2,\ldots,X_{k+\ell}\}$ such that~$X_i$ is a clique for~$1\leq i \leq k$ and it is an independent set otherwise. In that case,~$G$ is denoted by~$(X_1,X_2,\ldots,X_{k+\ell}, E(G))$. A~$(k,\ell)$-graph~$G = (X_1,X_2,\ldots,X_{k+\ell}, E(G))$ is called~$\textit{complete}$ if, for every~${1\leq i < j\leq k+\ell}$, any vertex in~$X_i$ is adjacent to any vertex in~$X_j$. A~$(1,1)$-graph~$G$ is called a~$\textit{split graph}$, a~$(0,2)$-graph~$G$ is called a~$\textit{bipartite graph}$, a~$(0,3)$-graph~$G$ is called a~$\textit{tripartite graph}$ and a~$(0,4)$-graph~$G$ is called a~$\textit{4-partite graph}$.

For a graph~$G$, we denote by~$\mathfrak{S}(G)$ the set of all permutations of~$V(G)$ and by~$\mathcal{T}(G)$ the set of all triangles in~$G$. A~$\textit{matching}$ in a graph~$G$ is a set of edges that are pairwise not incident. Let $G = (K,S,E(G))$ be a split graph. We say that $G$ is a \textit{threshold graph} if there exists a permutation of vertices $x_1x_2\cdots x_{|K|} \in \mathfrak{S}(K)$ and $y_1y_2\cdots y_{|S|} \in \mathfrak{S}(S)$ such that $N[x_{i+1}] \subseteq N[x_i]$ for all $1 \leq i < |K|$ and $N(y_i) \subseteq N(y_{i+1})$ for all $1 \leq i < |S|$.  

An~$\textit{edge-coloring}$ of a graph~$G$ is a collection~$\{M_1,M_2,\ldots,M_k\}$ of pairwise disjoint matchings in~$G$ such that~$M_1 \cup M_2 \cup \cdots \cup M_k = E(G)$. 
The chromatic index of a graph $G$ is the minimum size of an edge coloring of $G$, and it is denoted by $\chi'(G)$.

The next lemma generalizes in a direction Lemma 4 of
\cite{Bonamy2022}.

\begin{lemma} \label{lemma:extendpacking}
Let~$G$ be a graph, let~$\{X,Y\}$ be a partition of $V(G)$ such that all edges between~$X$ and~$Y$ exist. Then, there exists a packing of size at least~$\min\{1,\frac{|Y|}{\chi'(G[X])}\}|E(G[X])|$.
Moreover, all triangles in such packing have two vertices in $X$ and one vertex in~$Y$.
\end{lemma}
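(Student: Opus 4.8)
The plan is to reduce the construction of such a packing to an edge-colouring of $G[X]$; this is exactly why the chromatic index $\chi'(G[X])$ appears in the bound. The key observation is that a triangle with two vertices $u,v\in X$ and one vertex $y\in Y$ comes almost for free: the edge $uv$ lies in $G[X]$, while the two edges $uy$ and $vy$ are present because every $X$--$Y$ edge exists by hypothesis. Building such triangles therefore amounts to assigning edges of $G[X]$ to vertices of $Y$, and the only requirement for edge-disjointness is that each edge of $G[X]$ be used at most once and that the edges of $G[X]$ assigned to any single $y\in Y$ form a matching (otherwise two triangles through $y$ would reuse an $X$--$Y$ edge).

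First I would fix an optimal edge-colouring of $G[X]$, that is, a partition of $E(G[X])$ into $\chi'(G[X])$ matchings $M_1,\dots,M_{\chi'(G[X])}$, indexed so that $|M_1|\ge|M_2|\ge\cdots$. I would then take the first $t:=\min\{|Y|,\chi'(G[X])\}$ of these matchings and assign each chosen $M_i$ to a distinct vertex $y_i\in Y$, which is possible precisely because $t\le|Y|$. For every edge $uv\in M_i$ I form the triangle $\{u,v,y_i\}$; by construction each of these has two vertices in $X$ and one in $Y$, as the statement demands.

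Next I would verify edge-disjointness. Two triangles assigned to distinct vertices $y_i\ne y_j$ can only clash on an edge of $G[X]$, but they come from different colour classes $M_i,M_j$ and hence share no such edge. Two triangles $\{u,v,y_i\}$ and $\{u',v',y_i\}$ assigned to the same $y_i$ arise from distinct edges $uv,u'v'$ of the matching $M_i$; since $M_i$ is a matching these edges are vertex-disjoint, so the four incident $X$--$Y$ edges are all distinct. Thus the triangles are pairwise edge-disjoint and form a valid packing.

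It remains to count the triangles, whose number equals $\sum_{i=1}^{t}|M_i|$. If $|Y|\ge\chi'(G[X])$ then $t=\chi'(G[X])$, every colour class is used, and the packing has $|E(G[X])|$ triangles, which matches the bound since the minimum equals $1$. If $|Y|<\chi'(G[X])$ then $t=|Y|$, and the point is a simple averaging estimate: because the $M_i$ partition $E(G[X])$ and are listed in non-increasing order of size, the $|Y|$ largest of them carry at least a $|Y|/\chi'(G[X])$ share of all edges, i.e.\ $\sum_{i=1}^{|Y|}|M_i|\ge\frac{|Y|}{\chi'(G[X])}\,|E(G[X])|$. This averaging inequality---that the top $t$ of $\chi'(G[X])$ parts hold at least a $t/\chi'(G[X])$ fraction of the whole---is the only quantitative ingredient that needs an argument; everything else is the bookkeeping of the construction, so I expect no serious obstacle.
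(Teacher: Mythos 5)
Your proposal is correct and follows essentially the same route as the paper's proof: an optimal edge-colouring of $G[X]$ with colour classes sorted by size, each of the top $\min\{|Y|,\chi'(G[X])\}$ matchings assigned to a distinct vertex of $Y$ to form triangles, and the same averaging estimate on the largest colour classes to obtain the bound. Your write-up is in fact slightly more explicit than the paper's on why the resulting triangles are pairwise edge-disjoint, but there is no substantive difference in the argument.
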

\begin{proof}
Set $G'=G[X]$ and let~$\{M_1,M_2,\ldots,M_{\chi'(G[X])}\}$ be an edge coloring of~$G[X]$.
We can extend each of these matchings to form a packing of~$G$ in the following way.
Let~$\{v_1,v_2,\ldots,v_{\min\{\chi'(G'),|Y|\}}\} \subseteq Y$.
For every~$i \in \{1,2,\ldots,\min\{\chi'(G'),|Y|\}\}$, let~$P_i=\{v_iuw: uw \in M_i\}$.
Observe that every~$P_i$ is a packing in~$G$, as every~$M_i$ is a matching in~$G'$.
Observe also that~$|P_i|=|M_i|$. Also, as every two matchings
in~$\{M_1, M_2, \ldots , M_{\chi'(G')}\}$ are disjoint, and each matching forms triangles with a different vertex from~$Y$,
$P=P_1 \cup P_2 \cup \cdots \cup P_{\min\{\chi'(G'),|Y|\}}$ is a packing in~$G$ of cardinality
${|M_1| + |M_2| + \cdots + |M_{\min\{\chi'(G'),|Y|\}}|}$.
We may assume, without loss of generality,
that~$|M_1| \geq |M_2| \geq  \cdots  \geq  |M_{\chi'(G')}|$.
Hence $|P| \geq \frac{\min\{\chi'(G'),|Y|\}}{\chi'(G')}\sum_{i=1}^{\chi'(G')}|M_i|= \min\{1,\frac{|Y|}{\chi'(G[X])}\}|E(G[X])|$.
\end{proof}

We will also use the next well-known theorem of Vizing.
\begin{proposition}[{\cite{Vizing64}},
see also {\cite[Theorem 5.3.2]{Diestel10}}]\label{prop:tuza-vizing}
For every graph~$G$,~$\chi'(G) \leq \Delta(G) + 1$.
\end{proposition}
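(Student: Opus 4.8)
The plan is to prove Vizing's bound by induction on the number of edges $|E(G)|$, using the classical \emph{fan} and \emph{Kempe chain} recoloring arguments. Write $\Delta := \Delta(G)$ and fix a palette of $\Delta+1$ colors. For a partial proper edge coloring $c$ and a vertex $v$, say that a color is \emph{missing} at $v$ if no edge incident to $v$ carries it; since $d(v) \le \Delta < \Delta+1$, every vertex always misses at least one color. This slack of one extra color is the engine of the whole argument. The base case $|E(G)| = 0$ is trivial, so for the inductive step I pick any edge $xy_0$, color $G - xy_0$ with $\Delta+1$ colors by the induction hypothesis, and then show that this coloring extends (after possible recoloring) to all of $G$.

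The first tool is a \emph{fan} at $x$: a sequence $y_0, y_1, \ldots, y_s$ of distinct neighbors of $x$ such that $xy_0$ is the uncolored edge and, for each $i \ge 1$, the color $c(xy_i)$ is missing at $y_{i-1}$. The key feature of a fan is that it can be \emph{rotated}: reassigning $c(xy_i) := c(xy_{i+1})$ for all $i < j$ and uncoloring $xy_j$ produces another valid partial coloring in which the uncolored edge has moved from $xy_0$ to $xy_j$; validity holds precisely because each new color $c(xy_{i+1})$ was missing at $y_i$. I take a fan that is \emph{maximal}, meaning no further neighbor of $x$ can be appended, and I let $\alpha$ be a color missing at $x$ and $\beta$ a color missing at the last fan vertex $y_s$. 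If $\alpha$ is also missing at $y_s$, I rotate the entire fan and color $xy_s$ with $\alpha$, which finishes the coloring since $\alpha$ is then free at both endpoints.

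The remaining case is $\alpha$ present at $y_s$, and here maximality does the crucial work: since $\beta$ is missing at $y_s$ but (in this case) present at $x$, the unique edge of color $\beta$ at $x$ must be some fan edge $xy_j$, for otherwise that edge would certify an extension of the fan. The fan property then gives the decisive fact that $\beta = c(xy_j)$ is missing at $y_{j-1}$. I now invoke the $\alpha/\beta$-Kempe chains, that is, the components of the subgraph spanned by the edges colored $\alpha$ or $\beta$, which are paths and even cycles along which swapping $\alpha \leftrightarrow \beta$ keeps the coloring proper. Consider the chain $P$ containing $y_s$. If $P$ does not reach $x$, I invert it so that $y_s$ now misses $\alpha$, then rotate the whole fan and assign $\alpha$ to $xy_s$. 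If $P$ does reach $x$, then $x$ and $y_s$ are the two endpoints of $P$; since $y_{j-1}$ also misses $\beta$, it cannot be an interior vertex of $P$ and hence lies in a different chain $P'$, which therefore avoids $x$. I invert $P'$ so that $y_{j-1}$ misses $\alpha$, rotate only the sub-fan $y_0, \ldots, y_{j-1}$, and color $xy_{j-1}$ with $\alpha$.

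I expect the main obstacle to be the careful bookkeeping that makes these Kempe swaps compatible with the rotations. One must verify that inverting a chain does not disturb the colors on which the subsequent rotation relies: concretely, that the only $\beta$-colored edge at $x$ is $xy_j$, that no edge at $x$ is colored $\alpha$, and hence that the inverted chain leaves every relevant fan edge untouched, so the fan remains valid and $x$ still misses $\alpha$. The delicate point is precisely the argument that $y_{j-1}$ and $y_s$ cannot both lie on the chain through $x$ — this is where the fact that $y_{j-1}$ misses $\beta$ (itself a consequence of the fan property $\beta = c(xy_j)$) combines with the endpoint structure of a Kempe chain, together with the one-color slack, to force the two chains into distinct components and thereby guarantee a productive recoloring in every case.
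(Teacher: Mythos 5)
The paper does not prove this statement: it is Vizing's theorem, imported with a citation to Diestel's Theorem~5.3.2, whose proof is the same fan/Kempe-chain induction you reconstruct. So your proposal must be judged against that standard argument, and while its architecture (induction on edges, maximal fan, rotation, $\alpha/\beta$-chain inversion) is the right one, one branch of your case analysis genuinely fails. In the branch where the chain $P$ through $y_s$ avoids $x$, you invert $P$, rotate the \emph{whole} fan, and color $xy_s$ with $\alpha$. This breaks down when the other endpoint of $P$ is $y_{j-1}$, which nothing rules out: both $y_s$ and $y_{j-1}$ miss $\beta$, so both are legitimate endpoints of a single $\alpha/\beta$-path avoiding $x$ (for instance, $P$ could consist of a single $\alpha$-colored edge $y_sy_{j-1}$). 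After the inversion, the $\alpha$-edge of $P$ at $y_{j-1}$ becomes a $\beta$-edge, so $\beta$ is now \emph{present} at $y_{j-1}$; but the full rotation assigns $c(xy_j)=\beta$ to the edge $xy_{j-1}$, producing two $\beta$-edges at $y_{j-1}$ and an improper coloring. Your closing verification is exactly where the error hides: checking that the inversion leaves the fan \emph{edges} untouched does not show ``the fan remains valid,'' because fan validity also consists of missing-color conditions at the fan \emph{vertices}, and the unique such condition involving $\alpha$ or $\beta$ --- namely ``$\beta$ is missing at $y_{j-1}$'' --- is precisely the one an $\alpha/\beta$-swap can destroy.

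The repair is standard. Either split on whether the chain through $x$ (whose first edge is $xy_j$) terminates at $y_{j-1}$: if it does not, then $y_{j-1}$'s chain avoids $x$, and you invert it and rotate only to $y_{j-1}$ (your second branch, which is correct as written); if it does, then $y_s$'s chain avoids both $x$ and $y_{j-1}$, and only then is your full-rotation step sound. Equivalently, patch your failing subcase directly: when $P$ avoids $x$ but ends at $y_{j-1}$, observe that after inversion $y_{j-1}$ misses $\alpha$, so rotate the sub-fan $y_0,\ldots,y_{j-1}$ and color $xy_{j-1}$ with $\alpha$. Separately, a minor slip: your case hypothesis ``$\alpha$ present at $y_s$'' does not imply ``$\beta$ present at $x$''; you need the one-line additional case in which $\beta$ is missing at both $x$ and $y_s$, handled by rotating the full fan and coloring $xy_s$ with $\beta$.
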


For our first two main results, we will use the next lemma. This generalizes an idea that appears implicitly in the proof of Proposition 4 of \cite{Gyori87}, which was later used to show Tuza's conjecture for arbitrary dense graphs \cite{Tuza90}.
\begin{lemma}\label{lemma:main}
   Let~$G$ and~$G'$ be two graphs. Let~$T \subseteq \mathcal{T}(G)$. Let~$P'$ be a packing in~$G'$, and let~$\Pi^{'} \subseteq \mathfrak{S}(G)$. Then,~$\nu(G) \geq \frac{1}{|\Pi^{'}|} \sum_{ t \in T}\sum_{t' \in P'} |\{\pi \in  \Pi^{'}: t = \pi(t^{'})\}|$. 
\end{lemma}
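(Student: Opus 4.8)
The plan is to read the right-hand side as an average over the permutations in $\Pi'$ and, for each individual permutation, to produce an honest triangle packing of $G$ whose cardinality is exactly the term being averaged. First I would exchange the order of summation: writing the triple count as a sum over $\pi$, one gets
$$\sum_{t\in T}\sum_{t'\in P'}\bigl|\{\pi\in\Pi':t=\pi(t')\}\bigr| \;=\; \sum_{\pi\in\Pi'}\bigl|\{(t,t')\in T\times P':t=\pi(t')\}\bigr|.$$
For a fixed $\pi$, the image $\pi(t')$ is determined by $t'$, so a pair $(t,t')$ is counted precisely when $t'\in P'$ and its image $\pi(t')$ happens to lie in $T$; thus the inner count equals $\bigl|\{t'\in P':\pi(t')\in T\}\bigr|$.

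Next I would argue that, for each fixed $\pi\in\Pi'$, the family $Q_\pi:=\{\pi(t'):t'\in P' \text{ and } \pi(t')\in T\}$ is a triangle packing of $G$. Every element of $Q_\pi$ belongs to $T\subseteq\mathcal{T}(G)$, so it is a genuine triangle of $G$. The elements are pairwise edge-disjoint: since $\pi$ is a bijection of the vertex set it maps edges to edges bijectively, and therefore carries the pairwise edge-disjoint triangles of $P'$ to triangles that remain pairwise edge-disjoint. Finally the elements are distinct, because $\pi$ is injective on triangles. Hence $Q_\pi$ is a set of pairwise edge-disjoint triangles of $G$, which gives $\bigl|\{t'\in P':\pi(t')\in T\}\bigr|=|Q_\pi|\le\nu(G)$.

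It then remains only to average this bound over $\Pi'$:
$$\nu(G)\;\ge\;\frac{1}{|\Pi'|}\sum_{\pi\in\Pi'}|Q_\pi| \;=\;\frac{1}{|\Pi'|}\sum_{\pi\in\Pi'}\bigl|\{t'\in P':\pi(t')\in T\}\bigr|,$$
and combining this with the rearrangement from the first paragraph yields the claimed inequality. The only genuinely substantive step is the middle one: one must verify that passing an edge-disjoint triangle family through a vertex permutation cannot create coincidences or shared edges, so that the images landing in $T$ really do assemble into a valid packing of $G$. The rest is Fubini together with the elementary counting identity for the inner term, so I expect no real difficulty there.
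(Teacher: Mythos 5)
Your proof is correct and is essentially the paper's argument: the paper defines the random variable $X(\pi)=|T\cap\pi(P')|$ for $\pi$ uniform over $\Pi'$ and bounds $\nu(G)\geq\mathbb{E}[X]$, which is precisely your deterministic averaging of $|Q_\pi|\leq\nu(G)$ after the Fubini exchange, since $Q_\pi=T\cap\pi(P')$. The only difference is cosmetic (expectation language versus double counting), and you additionally spell out the verification that $T\cap\pi(P')$ is a packing, which the paper merely asserts.
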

\begin{proof}
Let~$X$ be the random variable defined over~$\Pi^{'}$ by~$X(\pi) = |T \cap \pi(P')|$ for every~${\pi \in \Pi^{'}}$. Note that~$X = \sum_{ t \in T}\sum_{t' \in P'} X_{tt'}$, where~$X_{tt'}$ is the indicator variable of the event~$\{t=\pi(t')\}$. Thus,~$\mathbb{E}[X] = \sum_{ t \in T} \sum_{t' \in P'} \mathbb{P}(t=\pi(t'))$. Note that, since~$T \cap \pi(P')$ is a packing in~$G$ for every~$\pi \in \Pi^{'}$, then~$\nu(G) \geq \mathbb{E}[X]$, which concludes the proof. 
\end{proof}

\section{Dense split graphs}

The purpose of this section is to show the next theorem for split graphs.

\begin{theorem}\label{teorema:dense-split}
    Let~$G=(K,S,E(G))$ be a split graph on~$n$ vertices. If $\delta(G) \geq \frac{3n}{5}$, then Conjecture \ref{conjecture:tuza} holds.
\end{theorem}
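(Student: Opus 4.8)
The plan is to bound $\tau(G)$ and $\nu(G)$ separately and then combine. Write $k=|K|$ and $s=|S|$, so $n=k+s$. The starting structural observation is that, since $S$ is independent, no triangle of $G$ can have two vertices in $S$; hence every triangle of $G$ has either all three vertices in $K$ (call these \emph{inner} triangles) or exactly two vertices in $K$ and one in $S$ (call these \emph{mixed} triangles). Because each vertex of $S$ has all of its neighbours in $K$, the hypothesis $\delta(G)\ge\frac{3n}{5}$ forces $k\ge\frac{3n}{5}$, and therefore $s\le\frac{2n}{5}\le k-1$ for $n\ge 5$. I also record two facts used later: the edges of the inner triangles all lie in $K$, so any triangle hitting of $G$ restricts to a triangle hitting of the clique $K_k=G[K]$, giving $\tau(G)\ge\tau(K_k)$; and conversely $\tau(K_k)=\binom{k}{2}-\lfloor k^2/4\rfloor$, attained by keeping only the edges of a balanced complete bipartite subgraph of $K$.

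For the upper bound on $\tau(G)$ I would exhibit one explicit triangle-free subgraph $H$ and take the deleted edges as a hitting. Fix a balanced partition $K=K_1\cup K_2$ and keep in $H$ all $|K_1||K_2|$ edges between $K_1$ and $K_2$ (this destroys every inner triangle, since $G[K]$ becomes bipartite) together with, for each $v\in S$, all edges from $v$ to whichever of $K_1,K_2$ contains more of its neighbours, deleting the edges to the smaller side so that $v$ lies in no mixed triangle. The resulting $H$ is triangle-free, so
\[
\tau(G)\ \le\ |E(G)|-|E(H)|\ =\ \Bigl(\tbinom{k}{2}-\lfloor k^2/4\rfloor\Bigr)+\sum_{v\in S}\min\bigl(|N(v)\cap K_1|,\,|N(v)\cap K_2|\bigr),
\]
and the last sum is at most $\tfrac12\sum_{v\in S}d(v)$. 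Thus $\tau(G)$ splits into a \emph{clique cost} of order $k^2/4$ and an \emph{$S$-cost} of order $\tfrac12|E(K,S)|$.

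For the lower bound on $\nu(G)$ I would combine Lemmas~\ref{lemma:extendpacking} and~\ref{lemma:main}. Let $G'$ be the complete split graph on the same partition $(K,S)$, so $G\subseteq G'$, and let $\Pi'\subseteq\mathfrak{S}(G)$ be the permutations fixing $K$ and $S$ setwise, $|\Pi'|=k!\,s!$. I construct a packing $P'$ of $G'$ of mixed type by applying Lemma~\ref{lemma:extendpacking} with $X=K$, $Y=S$, obtaining about $\frac{s}{\chi'(K_k)}\binom{k}{2}\approx sk/2$ mixed triangles (using, via a $1$-factorisation of $K_k$, roughly $sk/2$ clique edges as their bases, with $\chi'(K_k)\le k$ by Proposition~\ref{prop:tuza-vizing}), and then fill the remaining clique edges with inner triangles. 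Feeding $P'$ and $\Pi'$ into Lemma~\ref{lemma:main} gives $\nu(G)\ge\sum_{t'\in P'}\mathbb{P}_{\pi\in\Pi'}\bigl(\pi(t')\in\mathcal{T}(G)\bigr)$. A structure-preserving $\pi$ sends an inner triangle to an inner triangle, which is always present in $G$ because its image lies in the clique, so each such $t'$ contributes $1$; it sends a mixed triangle to a uniformly random mixed configuration, present in $G$ exactly when the chosen $S$-vertex is adjacent to both chosen $K$-vertices, an event of probability $p=\frac{1}{s\binom{k}{2}}\sum_{v\in S}\binom{d(v)}{2}\ge\frac{\delta(\delta-1)}{k(k-1)}$. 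The crucial numerical point is that $\delta\ge\frac{3n}{5}$ together with $k\le n$ makes $p>\tfrac13$ (for $n$ not too small), so a mixed triangle is at least as efficient per clique edge as an inner one, which each contribute $\tfrac13$; this justifies the greedy choice of using as many mixed triangles as the $S$-vertices allow before filling with inner ones, and yields a lower bound of the form $\nu(G)\gtrsim|P'_{\mathrm{inner}}|+p\cdot|P'_{\mathrm{mixed}}|$.

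Substituting these estimates into $\tau(G)\le 2\nu(G)$, the clique cost is absorbed because $\tau(K_k)\le\frac32\nu(K_k)$ leaves room $\approx k^2/12$, while the $S$-cost is paid by the $p\cdot|P'_{\mathrm{mixed}}|$ term (efficient precisely because $p>\tfrac13$) together with that leftover room; one then reduces to a single-variable inequality in $t=s/n\in(0,\tfrac25]$ and checks it across the range. I expect the main obstacle to be the lower bound on the \emph{leftover inner packing}: in the intermediate range of $s/n$ neither the pure inner packing ($\nu\ge\nu(K_k)$) nor the pure mixed packing alone suffices, so one genuinely needs the combined packing to be large, which requires a sufficiently efficient triangle packing of $K_k$ minus the $1$-factors spent on mixed bases. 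Controlling that packing — rather than through the weak estimate $\nu\ge|\mathcal{T}|/(k-2)$, which is lossy once the leftover graph is only moderately dense — and tuning the constants so that the threshold comes out at exactly $\frac{3n}{5}$, is where the real work lies.
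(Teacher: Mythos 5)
Your plan has a genuine gap, and it is exactly the one you flag yourself: the size of the \emph{leftover inner packing} is never established, and without it the ledger does not close. Concretely, after spending $s$ color classes of $G[K]\cong K_k$ on mixed bases, the remaining graph is $K_k$ minus $s$ near-perfect matchings, and your final inequality needs a triangle packing of size close to one third of its roughly $\binom{k}{2}-\frac{sk}{2}$ edges. The only generic tool at hand (the counting bound in the spirit of Corollary~\ref{cor:nu(G)geqTGnu(K_n)n3}, i.e.\ $\nu\geq|\mathcal{T}|\nu(K_k)/\binom{k}{3}\approx|\mathcal{T}|/k$, your ``$|\mathcal{T}|/(k-2)$'') certifies only about $\frac{k^2}{6}-\frac{sk}{2}$, and then the method fails in the intermediate range: take $s=n/5$, $k=4n/5$, all $S$-degrees equal to $3n/5$; your hitting bound gives $\tau\lesssim 0.22\,n^2$, while your certified packing gives $2\nu\gtrsim 0.143\,n^2$ (and no reallocation between mixed and inner triangles helps: even the pure inner choice yields only $2\nu(K_k)\approx 0.213\,n^2<0.22\,n^2$). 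So this is not a matter of tuning constants; one genuinely needs a near-optimal packing theorem for $K_k$ minus $s$ one-factors, which you do not prove. In addition, the reduction to a single-variable inequality in $t=s/n$ is asserted rather than carried out, and the small cases are untreated: your estimate $p>\frac13$ from $\delta\geq\frac{3n}{5}$ and $k\leq n$ requires $n\geq 11$, while the paper must (and does) argue $n\leq 8$ by a known result and $n=10$ by a separate computation.

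It is worth seeing how the paper sidesteps precisely this obstacle. In the regime relevant here ($|S|<|K|$, which is the only possibility when $\delta\geq\frac{3n}{5}$ and $S\neq\emptyset$), it never builds a hybrid packing inside the complete split graph: instead it applies Lemma~\ref{lemma:main} with $G'\cong K_n$, $\Pi'=\mathfrak{S}(K_n)$ and a \emph{maximum packing of $K_n$} (Corollary~\ref{cor:nu(G)geqTGnu(K_n)n3}), whose size is known exactly (Proposition~\ref{prop:pK}). This prices inner and mixed triangles simultaneously — yielding $\nu(G)\geq\bigl(\binom{k}{3}+\sum_{u\in S}\binom{d(u)}{2}\bigr)/n$ — so the interaction between the two types is handled by $\nu(K_n)$ and no leftover clique ever has to be packed. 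It then pairs this with the max-cut hitting bound $\tau(G)\leq\frac{|E(G)|}{2}-\frac{n-1}{4}$ (Proposition~\ref{prop:tau(G)leqfracE(G)2-frac|V(G)|-14}), which is essentially your bipartition bound, applies Cauchy--Schwarz to $\sum_{u\in S}d(u)$, and closes with the integrality trick that $2\nu(G)-\tau(G)>-1$ already forces $2\nu(G)\geq\tau(G)$. Your structural setup, your mixed-triangle probability $p=\frac{1}{s\binom{k}{2}}\sum_{v\in S}\binom{d(v)}{2}$, and your (slightly sharper) $\tau$ bound are all sound; to complete a proof along your lines you would have to either prove the missing packing result for $K_k$ minus matchings, or replace the two-part construction by randomizing a maximum packing of $K_n$ as the paper does.
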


We begin by state two results for the size of a maximum packing when the graph is a complete split graph, and a complete graph.

\begin{proposition}[see also {\cite[Corollary 5]{Bonamy2022}}]\label{prop:prop3}
For every complete split graph~$G=(K,S,E(G))$, there exists a packing in which all triangles have vertices in~$K$ and in~$S$, with size~$\frac{| K | - 1}{2} \cdot \min\{| S |, | K |\}$.
\end{proposition}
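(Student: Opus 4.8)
The plan is to obtain this packing as a direct application of Lemma~\ref{lemma:extendpacking} to the partition~$\{K,S\}$ of~$V(G)$. Since~$G$ is a \emph{complete} split graph, $K$ induces the complete graph~$K_{|K|}$ and every vertex of~$K$ is adjacent to every vertex of~$S$, so the hypothesis of the lemma (all edges between the two parts are present) is satisfied with~$X=K$ and~$Y=S$. The lemma then yields a packing, all of whose triangles have two vertices in~$K$ and one vertex in~$S$ (hence vertices in both~$K$ and~$S$, as required by the statement), of size at least~$\min\{1,\frac{|S|}{\chi'(G[K])}\}\,|E(G[K])|$.

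Next I would evaluate the two quantities in this bound. Because~$G[K]=K_{|K|}$, we have~$|E(G[K])|=\binom{|K|}{2}=\frac{|K|(|K|-1)}{2}$. For the chromatic index I would invoke Vizing's theorem (Proposition~\ref{prop:tuza-vizing}) to get~$\chi'(G[K])\leq \Delta(K_{|K|})+1=|K|$. Since~$\min\{1,\frac{|S|}{t}\}$ is nonincreasing in~$t$, replacing~$\chi'(G[K])$ by the larger value~$|K|$ only weakens the bound, so the packing has size at least~$\min\{1,\frac{|S|}{|K|}\}\cdot\frac{|K|(|K|-1)}{2}$.

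Finally I would split into the two cases determined by the minimum. If~$|S|\geq |K|$, the minimum equals~$1$ and the size is at least~$\frac{|K|(|K|-1)}{2}=\frac{|K|-1}{2}\cdot|K|$; if~$|S|<|K|$, the minimum equals~$\frac{|S|}{|K|}$ and the size is at least~$\frac{|S|}{|K|}\cdot\frac{|K|(|K|-1)}{2}=\frac{|K|-1}{2}\cdot|S|$. In both cases this equals~$\frac{|K|-1}{2}\cdot\min\{|S|,|K|\}$, which is exactly the claimed bound.

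I do not expect a genuine obstacle here, since the structural requirement of Lemma~\ref{lemma:extendpacking} is built into the definition of a complete split graph. The only mild subtlety is that the lemma delivers a bound in terms of~$\chi'(G[K])$, whereas the target is phrased in terms of~$|K|$; passing to the weaker Vizing estimate~$\chi'(G[K])\leq|K|$ is precisely what makes the two cases collapse into the single closed form~$\frac{|K|-1}{2}\min\{|S|,|K|\}$.
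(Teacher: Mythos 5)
Your proof is correct and follows essentially the same route as the paper: apply Lemma~\ref{lemma:extendpacking} with $X=K$ and $Y=S$, bound $\chi'(G[K])\leq |K|$ via Vizing (Proposition~\ref{prop:tuza-vizing}), and split into the cases $|S|\geq |K|$ and $|S|<|K|$. The only cosmetic difference is that you replace $\chi'(G[K])$ by $|K|$ inside the minimum via monotonicity before resolving the case split, whereas the paper resolves the minimum directly in each case (using $\chi'(G[K])\geq |K|-1$ in the second); the substance is identical.
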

\begin{proof}
First suppose that $|S| \geq |K|$.
By Proposition \ref{prop:tuza-vizing}, we have
$\chi'(G[K]) \leq |K|$.
Hence, by Lemma \ref{lemma:extendpacking}, there exists a packing in~$G$ of size at least $|E(G[K])| =\frac{| K | - 1}{2} \cdot \min\{| S |, | K |\}$.
Now suppose that $|S| \leq |K|-1$.
As $\chi'(G[K]) \geq |K|-1$, by Lemma \ref{lemma:extendpacking}, there exists a packing in~$G$ of size at least
$\frac{|S|}{\chi'(G[K])}|E(G[K])|
\geq 
\frac{|S|}{|K|}|E(G[K])|
=\frac{| K | - 1}{2} \cdot \min\{| S |, | K |\}$.
\end{proof}

 \begin{proposition}[{\cite[
Theorem 2]{Feder2012}}]
\label{prop:pK}
For every $n \geq 2$, we have
$\nu(K_n)=\frac{1}{3}({{n}\choose{2}} - k)$, where
$$
k=
\begin{cases}
0 & \text{: }n \text{ MOD } 6 \in \{1,3\}\\
4 & \text{: }n \text{ MOD } 6 =5\\
 \frac{n}{2} & \text{: }n \text{ MOD } 6 \in \{0,2\}\\
\frac{n}{2} + 1 & \text{: } n \text{ MOD } 6 =4.
\end{cases}
$$
Thus,~$\nu(K_n) \geq \frac{1}{3}({{n}\choose{2}} - \frac{n}{2} - \frac{3}{2})$ for every~$n \geq 2$. 
\end{proposition}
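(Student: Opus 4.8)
The plan is to treat the exact evaluation of~$\nu(K_n)$ as supplied by the cited theorem of Feder \etal, so that the only content left to establish is the uniform lower bound in the final clause. Since~$\nu(K_n)=\frac{1}{3}\bigl(\binom{n}{2}-k\bigr)$ with~$k$ determined by~$n \bmod 6$, the claim~$\nu(K_n)\geq\frac{1}{3}\bigl(\binom{n}{2}-\frac{n}{2}-\frac{3}{2}\bigr)$ is equivalent to the inequality~$k\leq\frac{n}{2}+\frac{3}{2}$, and I would prove this by checking it in each of the four residue cases.

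First I would dispose of the two cases where~$k$ is already at most~$\frac{n}{2}$: when~$n\bmod 6\in\{1,3\}$ we have~$k=0$, and when~$n\bmod 6\in\{0,2\}$ we have~$k=\frac{n}{2}$; in both situations~$k\leq\frac{n}{2}\leq\frac{n}{2}+\frac{3}{2}$ holds trivially for every~$n\geq 2$. Next, for~$n\bmod 6=4$ we have~$k=\frac{n}{2}+1$, and the inequality reduces to~$1\leq\frac{3}{2}$, which is immediate. The only case imposing any constraint on~$n$ is~$n\bmod 6=5$, where~$k=4$: here~$k\leq\frac{n}{2}+\frac{3}{2}$ is equivalent to~$n\geq 5$, which holds because the smallest positive integer congruent to~$5$ modulo~$6$ is exactly~$5$.

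The main (and essentially only) subtlety lies in this last case: the additive constant~$\frac{3}{2}$ in the stated bound is calibrated precisely so that the worst residue~$n\equiv 5\pmod 6$ is absorbed already at its smallest instance~$n=5$, where the inequality holds with equality. Beyond confirming these four arithmetic comparisons there is no genuine obstacle; the bound is deliberately loose enough to remain valid uniformly for all~$n\geq 2$, and one should only take care to include the boundary check~$n=2$ (residue~$2$, giving~$k=1$ and~$\nu(K_2)=0$) to cover the smallest admissible value.
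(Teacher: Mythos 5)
Your proposal is correct and matches the paper's treatment: the paper likewise takes the exact values of $\nu(K_n)$ directly from the cited theorem of Feder and Subi, with the final lower bound following from exactly the residue-by-residue verification that $k\leq\frac{n}{2}+\frac{3}{2}$ which you carry out (the paper leaves this routine check implicit). Your explicit handling of the tight case $n\equiv 5\pmod 6$ at $n=5$ and the boundary check at $n=2$ is a faithful, slightly more detailed rendering of the same argument.
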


We now apply Lemma \ref{lemma:main} and obtain the next two corollaries.

\begin{corollary}[see also {\cite[Proposition 4]{Gyori87}}] \label{cor:nu(G)geqTGnu(K_n)n3}
For any graph~$G$ on~$n$ vertices,~$\nu(G)~\geq~\frac{|\mathcal{T}(G)|\times \nu(K_n)}{\binom{n}{3}}$.
\end{corollary}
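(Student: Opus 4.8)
The plan is to invoke Lemma~\ref{lemma:main} with a carefully chosen quadruple $(G',T,P',\Pi')$ and let the symmetry of the complete graph do the averaging. Concretely, I would take $G'$ to be the complete graph $K_n$ on the same vertex set as $G$, set $T=\mathcal{T}(G)$ (all triangles of $G$), let $P'$ be a maximum triangle packing of $K_n$ so that $|P'|=\nu(K_n)$, and take $\Pi'=\mathfrak{S}(G)$, the full symmetric group on $V(G)$, so that $|\Pi'|=n!$. Since $K_n$ is complete, every $3$-subset of $V(G)$ is a triangle of $K_n$, so $P'$ makes sense and each $\pi(t')$ is again a triangle of $K_n$; the intersection $T\cap\pi(P')$ that the lemma requires to be a packing in $G$ is then automatically a set of edge-disjoint triangles of $G$, since it is contained in $T\subseteq\mathcal{T}(G)$.

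The heart of the argument is a uniform counting step. For a fixed triangle $t\in\mathcal{T}(G)$ and a fixed triangle $t'\in P'$, I would count the permutations $\pi\in\mathfrak{S}(G)$ with $\pi(t')=t$ as vertex sets: such a $\pi$ restricts to one of the $3!$ bijections from the three vertices of $t'$ onto the three vertices of $t$, and to an arbitrary bijection of the remaining $n-3$ vertices, giving exactly $6\,(n-3)!$ permutations. Crucially, this count does not depend on the choice of $t$ or $t'$.

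With this in hand, the double sum in Lemma~\ref{lemma:main} collapses into a product:
$$\sum_{t\in T}\sum_{t'\in P'}\bigl|\{\pi\in\Pi' : t=\pi(t')\}\bigr|=|\mathcal{T}(G)|\cdot\nu(K_n)\cdot 6\,(n-3)!.$$
Dividing by $|\Pi'|=n!$ and using $6\,(n-3)!/n!=1/\binom{n}{3}$ yields exactly $\nu(G)\geq\frac{|\mathcal{T}(G)|\,\nu(K_n)}{\binom{n}{3}}$, as claimed.

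I do not expect a serious obstacle here: once the right instantiation of Lemma~\ref{lemma:main} is identified, everything reduces to the elementary orbit-counting fact above. The only points demanding a little care are taking $G'$ on the same vertex set as $G$ (so that $\mathfrak{S}(G)$ acts on the triangles of $K_n$) and confirming the uniformity of the count $6\,(n-3)!$ across all pairs $(t,t')$, which is precisely what lets the sum factor and the factorials telescope into the binomial coefficient.
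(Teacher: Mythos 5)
Your proposal is correct and follows essentially the same route as the paper's proof: instantiating Lemma~\ref{lemma:main} with $G'\cong K_n$ on the same vertex set, $T=\mathcal{T}(G)$, a maximum packing $P'$ of $K_n$, and $\Pi'=\mathfrak{S}(G)$, then using the uniform count of $6(n-3)!$ permutations with $\pi(t')=t$ and simplifying $6(n-3)!/n!=1/\binom{n}{3}$. Your write-up even makes explicit the uniformity of the count and why $T\cap\pi(P')$ is a packing in $G$, details the paper leaves implicit.
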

\begin{proof}
Let $G' \cong K_n$ with $V(G')=V(G)$, let~$\Pi^{'} = \mathfrak{S}(G')$ and let~$P'$ be a maximum packing in~$G'$. Note that, for a fixed~$(t,t') \in \mathcal{T}(G)  \times P'$, there are exactly~$6(n-3)!$ permutations~$\pi \in \Pi^{'}$ such that~$t = \pi(t')$. Then, by Lemma \ref{lemma:main}, we have~$\nu(G)~\geq~ \frac{|\mathcal{T}(G)|\times |P'| \times 6(n-3)!}{n!}~= ~\frac{|\mathcal{T}(G)|\times \nu(K_n)}{\binom{n}{3}}$.
\end{proof}

\begin{corollary} \label{cor:corollary2}
If~$G=(K,S,E(G))$ is a split graph on~$n$ vertices, then~$\nu(G) \geq \frac{|T'|}{\max\{|S|,|K|\}}$, where~$T'$ is the set of triangles in~$G$ with vertices in~$K$ and~in~$S$. 
\end{corollary}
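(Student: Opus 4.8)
The plan is to apply Lemma~\ref{lemma:main} with a complete split graph as the model graph and with permutations that respect the split partition. First I would record a structural observation: since~$S$ is an independent set, no triangle can use two vertices of~$S$, so every triangle in~$T'$ has exactly two vertices in~$K$ and one vertex in~$S$.

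I would then take~$G'$ to be the complete split graph on~$V(G)$ with the same partition~$(K,S)$, so that every triangle with two vertices in~$K$ and one in~$S$ is a triangle of~$G'$. As the permutation family I would use~$\Pi' = \{\pi \in \mathfrak{S}(G) : \pi(K) = K,\ \pi(S) = S\}$, the partition-preserving permutations, for which~$|\Pi'| = |K|!\,|S|!$. As the packing~$P'$ in~$G'$ I would take the one furnished by Proposition~\ref{prop:prop3}, all of whose triangles have two vertices in~$K$ and one in~$S$ and which has size~$\frac{|K|-1}{2}\cdot\min\{|S|,|K|\}$.

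The crux is the count~$|\{\pi \in \Pi' : t = \pi(t')\}|$ for a fixed pair~$(t,t') \in T' \times P'$. Because~$t$ and~$t'$ both consist of two~$K$-vertices and one~$S$-vertex, and every~$\pi \in \Pi'$ preserves the partition, such a~$\pi$ must send the~$S$-vertex of~$t'$ to the~$S$-vertex of~$t$ and the two~$K$-vertices of~$t'$ to those of~$t$ in one of two orders, while the remaining vertices of~$K$ and of~$S$ are permuted arbitrarily. This yields exactly~$2\,(|K|-2)!\,(|S|-1)!$ permutations for every such pair, a count that does not depend on the choice of~$t$ and~$t'$.

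Finally I would substitute into Lemma~\ref{lemma:main}. Since each pair contributes equally, the double sum equals~$|T'|\,|P'|\cdot 2\,(|K|-2)!\,(|S|-1)!$, and dividing by~$|\Pi'| = |K|!\,|S|!$ gives~$\nu(G) \geq \frac{2\,|T'|\,|P'|}{|K|(|K|-1)|S|}$. Plugging in~$|P'| = \frac{|K|-1}{2}\min\{|S|,|K|\}$ collapses this to~$\frac{|T'|\cdot\min\{|S|,|K|\}}{|K|\,|S|}$, and the identity~$\min\{|S|,|K|\}\cdot\max\{|S|,|K|\} = |K|\,|S|$ rewrites it as~$\frac{|T'|}{\max\{|S|,|K|\}}$, as desired. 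I expect the one delicate point to be the uniform permutation count: it is essential that the triangles of~$P'$ share the ``two in~$K$, one in~$S$'' shape with those of~$T'$, so that every pair contributes the same factor and the sum factors cleanly.
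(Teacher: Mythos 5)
Your proposal is correct and follows essentially the same route as the paper's own proof: the same complete split graph model, the same partition-preserving permutation family with the packing from Proposition~\ref{prop:prop3}, the identical count of $2\,(|K|-2)!\,(|S|-1)!$ permutations per pair, and the same final simplification via $\min\{|S|,|K|\}\cdot\max\{|S|,|K|\}=|K|\,|S|$. Your explicit justification that every triangle of $T'$ has exactly two vertices in $K$ (since $S$ is independent), which underpins the uniform count, is a point the paper leaves implicit.
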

\begin{proof}
Let~$G' = (K,S,E(G'))$ be a complete split graph and let~$\Pi^{'} \subseteq \mathfrak{S}(G')$ be such that, for each~$\pi \in \Pi^{'}$,~$\pi(v) 
 \in V(K)$ if and only if~$v \in V(K)$.
Note that, by Proposition \ref{prop:prop3}, there exists a packing~$P^{'}$ in~$G'$, in which all triangles have vertices in~$K$ and in~$S$, with size at least~$\frac{|K|-1}{2} \cdot \min\{|S|,|K|\}$. 

Also, for a fixed~$(t,t') \in T' \times P'$, there are exactly~$2(|K|-2)!\times (|S|-1)!$ permutations~${\pi \in \Pi^{'}}$ such that~$t = \pi(t')$. Thus, by Lemma \ref{lemma:main}, we have~$\nu(G) \geq \frac{|T'|\times|P'|\times 2(|K|-2)!\times (|S|-1)!}{|K|! \times |S|!} = \frac{|T'| \times |P'|}{|S| \times \binom{|K|}{2}} \geq \frac{|T'|}{\max\{|S|,|K|\}}$.
 \end{proof}

We also use the next result to bound the size of a triangle hitting for an arbitrary graph.
It is easy to see that the complement of an edge-cut
is a hitting set. Thus, as any graph $G$ has an edge-cut of size
at least $\frac{|E(G)|}{2}+\frac{|V(G)|-1}{4}$
 \cite[Lemma 2] {Bollobas02}, we obtain the next result.

\begin{proposition}\label{prop:tau(G)leqfracE(G)2-frac|V(G)|-14}
For every graph $G$, we have $\tau(G) \leq \frac{|E(G)|}{2}-\frac{|V(G)|-1}{4}$.
\end{proposition}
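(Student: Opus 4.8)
The plan is to leverage the two ingredients already flagged in the surrounding text: that the complement of an edge-cut is a triangle hitting, and that every graph admits a reasonably large edge-cut. First I would fix a partition $\{A,B\}$ of $V(G)$ and let $C$ be the corresponding edge-cut, i.e. the set of edges with one endpoint in $A$ and one in $B$. The crucial (and only conceptual) observation is that the subgraph obtained by keeping only the edges of $C$ is bipartite with parts $A$ and $B$, hence has no odd cycle and in particular no triangle. Therefore deleting the complementary set $E(G)\setminus C$ destroys every triangle of $G$, so $E(G)\setminus C$ is a triangle hitting and consequently $\tau(G)\le |E(G)|-|C|$.

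Second, I would make this bound as strong as possible by choosing the partition so that $|C|$ is maximum. Invoking the cited maximum-cut estimate \cite[Lemma 2]{Bollobas02}, the graph $G$ possesses an edge-cut of size at least $\frac{|E(G)|}{2}+\frac{|V(G)|-1}{4}$. Substituting this lower bound on $|C|$ into $\tau(G)\le |E(G)|-|C|$ yields at once
\[
\tau(G)\le |E(G)|-\left(\frac{|E(G)|}{2}+\frac{|V(G)|-1}{4}\right)=\frac{|E(G)|}{2}-\frac{|V(G)|-1}{4},
\]
which is exactly the claimed inequality.

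I do not expect any real obstacle here, since both ingredients are available off the shelf: once the max-cut lower bound is cited, the conclusion is a single substitution with no case analysis or further estimation. The one step that genuinely needs care is the bipartiteness argument justifying that the discarded edges form a hitting set, as everything else in the proof is purely arithmetic.
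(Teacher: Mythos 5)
Your proposal is correct and matches the paper's argument exactly: the paper derives the proposition from the same two ingredients, namely that the complement of an edge-cut is a triangle hitting set (the cut edges form a bipartite, hence triangle-free, subgraph) and the cited max-cut bound of size at least $\frac{|E(G)|}{2}+\frac{|V(G)|-1}{4}$ from \cite[Lemma 2]{Bollobas02}. Your write-up simply makes explicit the substitution that the paper leaves to the reader.
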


We now proceed to the proof of Theorem \ref{teorema:dense-split}. Since Conjecture \ref{conjecture:tuza} is valid when~$n \leq$ 8 \cite[Theorem 1.2]{Puleo15}, we will assume that~$n\geq 9$. We begin by proving Conjecture 1 when $|S| \geq |K|$ and $\delta(G) \geq  \frac{1}{2} + \sqrt{\frac{n^2 -2n +2}{8}}$, or $|S| < |K|$ and $\delta(G) \geq  \frac{n+2}{4} + \sqrt{\frac{5n^2 + 2n - 72}{48}} $. This proof will be divided in two cases.

\begin{case}
$|S| \geq |K|$.
\end{case}

We will prove that, if $\delta(G) \geq  \frac{1}{2} + \sqrt{\frac{n^2 -2n +2}{8}} $, then Conjecture \ref{conjecture:tuza} holds. By this condition, we have that $\delta^2(G) - \delta(G) \geq \frac{n(n-2)}{8}$.
Note that~$\nu(G) \geq \sum_{u \in S} \binom{d(u)}{2}/|S|$, by Corollary \ref{cor:corollary2}. Also,~$\tau(G) \leq \binom{|K|}{2}$, because the set of all edges in~$K$ is a hitting set of~$G$. Then, as~$|K| \leq \frac{n}{2}$, \begin{eqnarray} \tau(G) \leq \binom{|K|}{2} \leq \frac{n^2 -2n}{8} \leq  \delta^2(G) - \delta(G). \label{eq:e} \end{eqnarray}
Also, by Cauchy-Schwarz inequality,~$\sum_{u \in S} d^2(u) \geq (\sum_{u \in S} d(u))^2/|S|$. Hence, 
\begin{eqnarray}
2\nu(G)
 &\geq& \frac{1}{|S|}\Big(\sum_{u \in S} d^2(u) - \sum_{u \in S} d(u)\Big) \nonumber \\
 &\geq& \frac{1}{|S|^2}\Big((\sum_{u \in S} d(u))^2 - |S|\sum_{u \in S} d(u)\Big) \nonumber \\
 &\geq&  \delta^2(G) - \delta(G). \label{eq:2}
\end{eqnarray}
By \eqref{eq:e} and \eqref{eq:2}, we have ~$2\nu(G) \geq \tau(G)$, as we want. This finishes the proof of Case 1.

Before continue to the proof of Case 2, we will obtain an important inequality.
Let~${k=|K|}$. By Proposition \ref{prop:pK},~$\nu(K_n) \geq \frac{n^2 -2n -3}{6}$. Thus, as~$n \geq 5$, we have that~$\nu(K_n) \geq \frac{(n-2)(n-1)}{6}$. Also, by Corollary \ref{cor:nu(G)geqTGnu(K_n)n3}, we have that~$\nu(G) \geq \Big(\binom{k}{3} + \sum_{u \in S} \binom{d(u)}{2}\Big) \cdot  \nu(K_n)/\binom{n}{3} \geq \Big(\binom{k}{3} + \sum_{u \in S} \binom{d(u)}{2}\Big)/n$. By Proposition \ref{prop:tau(G)leqfracE(G)2-frac|V(G)|-14}, we have~$\tau(G) \leq \frac{1}{2}(\binom{k}{2} + \sum_{u \in S} d(u) - \frac{n-1}{2})$. Hence, by Cauchy-Schwarz inequality, we have that
\begin{eqnarray}
2\nu(G) - \tau(G) 
&\geq& \frac{2}{n}\binom{k}{3} + \frac{1}{n}\sum_{u \in S} d^2(u) - \frac{1}{n}\sum_{u \in S} d(u) -\frac{1}{2}\binom{k}{2} -\frac{1}{2}\sum_{u \in S} d(u)+\frac{n-1}{2} \nonumber \\
&\geq &
\frac{1}{n}\Big((\sum_{u \in S} d(u))^2/|S|\Big)  - \frac{n+2}{2n}\sum_{u \in S} d(u) 
+\frac{2}{n}\binom{k}{3}-\frac{1}{2}\binom{k}{2} + \frac{n-1}{2} \nonumber \\ 
&\geq&
\frac{n-k}{n}\cdot 
\Big( 
\delta^2(G) - \frac{\delta(G)(n+2)}{2} + \frac{k(k-1)}{n-k} \cdot \frac{4k-3n-8}{12} + \frac{1}{n-k}\binom{n}{2} \Big) 
\label{eq:4}
\end{eqnarray}

We now proceed to the proof of Case 2.
\newpage
\begin{case}
$|S| < |K|$.
\end{case}
We will prove that, if $\delta(G) \geq  \frac{n+2}{4} + \sqrt{\frac{5n^2 + 2n - 72}{48}} $, then Conjecture \ref{conjecture:tuza} holds.\\
By \eqref{eq:4}, we have
\begin{eqnarray}
2\nu(G) - \tau(G)
&\geq&
\frac{n-k}{n} \cdot
\Big( 
\delta^2(G) - \frac{\delta(G)(n+2)}{2} + k \cdot \frac{4k-3n-8}{12} + \frac{n}{2} \Big) \nonumber \\
&\geq&
\frac{n-k}{n}  \cdot \Big(\delta^2(G) - \frac{\delta(G)(n+2)}{2} - (n^2 -5n +6)/24\Big) \nonumber
\\
  &\geq& \frac{n-k}{n} \cdot (-2) \nonumber \\
  &>& -1 \nonumber
\label{eq:5}.
\end{eqnarray}
Where the second inequality is valid
because~$k \cdot \frac{4k-3n-8}{12}$ is an increasing function on $k$ and~$k \geq \frac{n+1}{2}$; and the third inequality is valid by the case condition.
Since~$2\nu(G) - \tau(G)$ is integer, it implies that~$2\nu(G) - \tau(G) \geq 0$. This finishes the proof of Case 2.

Note that $ \lceil \frac{3n}{5} \rceil \geq \max \{ \lceil \frac{1}{2} + \sqrt{\frac{n^2 -2n +2}{8}} \rceil ,\lceil \frac{n+2}{4} + \sqrt{\frac{5n^2 + 2n - 72}{48}}\rceil \}$
for $n=9$ and $n \geq 11$. For $n=10$, $\max \{ \lceil \frac{1}{2} + \sqrt{\frac{n^2 -2n +2}{8}} \rceil ,\lceil \frac{n+2}{4} + \sqrt{\frac{5n^2 + 2n - 72}{48}}\rceil \} = 7$ and $\frac{3n}{5} = 6$. Thus, the conjecture holds if $\delta(G) \geq 7$. Now, if $\delta(G) = 6$, then $k \geq 6$.  By \eqref{eq:4}, we have that \begin{eqnarray}
2\nu(G) - \tau(G)
&\geq&
\frac{n-k}{n}
\Big( 
\delta^2(G) - \delta(G)(n+2)/2 + \frac{k(k-1)}{n-k} \cdot \frac{4k-3n-8}{12} + \frac{1}{n-k}\binom{n}{2} \Big) \nonumber \\
&\geq& \frac{(n-k)}{n} \cdot \frac{5}{2} \nonumber \\
&\geq& 0. \nonumber
\end{eqnarray}
This finishes the proof of Theorem \ref{teorema:dense-split}.

\section{Dense tripartite graphs}

Szestopalow {\cite[Theorem 4.1.5]{Sze16}} showed that~$\tau(G)/\nu(G) \leq \frac{28}{15} \approx 1.87$ for every tripartite graph~$G$.
In this section, we will improve this bound when
$G$ is dense (Theorem \ref{thm:3partite}). We will use the next property, also known as König's line coloring Theorem.

\begin{proposition}[{\cite{Konig1916}}] \label{prop:Konig}
For every bipartite graph~$G$,~$\chi'(G) \leq \Delta(G)$.
\end{proposition}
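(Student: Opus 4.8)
The plan is to prove the equivalent equality $\chi'(G)=\Delta(G)$ for a bipartite graph $G$, where I write $\Delta:=\Delta(G)$. The lower bound $\chi'(G)\ge\Delta$ is immediate, since the $\Delta$ edges incident to a vertex of maximum degree must all receive distinct colours; so the content is to produce a proper edge-colouring of $G$ using only the colour set $\{1,\dots,\Delta\}$. I would do this by induction on $|E(G)|$, the edgeless case being vacuous. (An alternative route would be to embed $G$ into a $\Delta$-regular bipartite graph and decompose the latter into $\Delta$ perfect matchings via Hall's theorem, but the colour-swapping induction below is more self-contained, so that is the one I would write out.)

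For the inductive step I would choose any edge $uv$ and colour $G-uv$ properly with $\Delta$ colours, which is possible by the inductive hypothesis because $\Delta(G-uv)\le\Delta$. Since deleting $uv$ leaves $u$ and $v$ each incident to at most $\Delta-1$ coloured edges, there is a colour $\alpha$ missing at $u$ and a colour $\beta$ missing at $v$. If $\alpha=\beta$, I assign that colour to $uv$ and the colouring is complete. The whole difficulty is concentrated in the case $\alpha\ne\beta$.

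To handle $\alpha\ne\beta$ I would use a Kempe chain. Let $H$ be the spanning subgraph of $G-uv$ formed by the edges coloured $\alpha$ or $\beta$; in a proper colouring every vertex meets at most one edge of each colour, so $\Delta(H)\le 2$ and $H$ is a disjoint union of paths and even cycles. Let $P$ be the component of $H$ containing $u$. Because $\alpha$ is absent at $u$, vertex $u$ has degree at most $1$ in $H$, so either $u$ is isolated in $H$ (in which case $\beta$ is also absent at $u$ and I can colour $uv$ with $\beta$ directly) or $P$ is a path whose edge at $u$ is coloured $\beta$. Interchanging $\alpha\leftrightarrow\beta$ along $P$ keeps the colouring proper and frees $\beta$ at $u$; provided $v\notin V(P)$, the swap does not touch any edge at $v$, so $\beta$ stays absent at $v$ and I colour $uv$ with $\beta$.

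The crux, and the only place bipartiteness enters, is verifying that $v\notin V(P)$. Suppose instead $v\in V(P)$. Since $\beta$ is absent at $v$, vertex $v$ has degree at most $1$ in $H$, hence must be the far endpoint of $P$, joined by an $\alpha$-coloured edge. Then $P$ is a $u$--$v$ path whose edges alternate $\beta,\alpha,\beta,\dots,\alpha$, starting with $\beta$ at $u$ and ending with $\alpha$ at $v$, which forces an \emph{even} number of edges. But in a bipartite graph a $u$--$v$ walk of even length has both endpoints in the same part, whereas $uv\in E(G)$ puts $u$ and $v$ in opposite parts -- a contradiction. Hence $P$ avoids $v$, the swap succeeds, and the induction closes. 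This parity obstruction is exactly what can fail in a non-bipartite graph, where the $\alpha\beta$-chain from $u$ may terminate at $v$; that is the reason Vizing's theorem (Proposition \ref{prop:tuza-vizing}) only guarantees $\Delta+1$ colours in general, and overcoming it is the single step I expect to be the heart of the argument.
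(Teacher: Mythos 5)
Your proof is correct. The paper does not prove this statement at all --- it is quoted as K\"onig's classical line-colouring theorem with a citation to the original source --- so there is no internal proof to compare against; your colour-swapping induction is the standard argument for exactly this result, and the one delicate step (that the $\alpha\beta$-chain starting at $u$ cannot end at $v$, because an alternating path beginning with $\beta$ and ending with $\alpha$ has even length while $u$ and $v$ lie in opposite parts) is handled correctly, as is the degenerate case where $u$ is isolated in the two-coloured subgraph.
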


We now proceed to the proof of our main theorem.

\begin{theorem}\label{thm:3partite}
For every tripartite graph~$G=(I_1,I_2,I_3,E(G))$ with~$n$ vertices and~$m > \frac{n^2}{4}$ edges, we have~$\tau(G) \leq \frac{n^2}{3(4m-n^2)}\times\nu(G)$.
\end{theorem}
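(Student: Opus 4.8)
The plan is to bound $\nu(G)$ from below by a probabilistic comparison with the complete tripartite graph, to bound $\tau(G)$ from above by deleting the edges between two parts, and to feed both into a lower bound on the number of triangles. Write $n_i=|I_i|$ and let $e_{ij}$ be the number of edges between $I_i$ and $I_j$, so $m=e_{12}+e_{13}+e_{23}$; I may assume $n_1\geq n_2\geq n_3$. Mimicking Corollary~\ref{cor:corollary2}, I would apply Lemma~\ref{lemma:main} with $G'$ the complete tripartite graph on the same parts and $\Pi'$ the permutations fixing each $I_i$ setwise. Every common neighbour of a vertex of $I_i$ and a vertex of $I_j$ lies in the third part, so Lemma~\ref{lemma:extendpacking} together with König's Theorem (Proposition~\ref{prop:Konig}) gives $\nu(K_{n_1,n_2,n_3})=n_2n_3$. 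Counting, for fixed $t\in\mathcal{T}(G)$ and $t'\in P'$, the $(n_1-1)!(n_2-1)!(n_3-1)!$ permutations of $\Pi'$ sending $t'$ to $t$, Lemma~\ref{lemma:main} yields
$$\nu(G)\ \geq\ \frac{|\mathcal{T}(G)|\,\nu(K_{n_1,n_2,n_3})}{n_1n_2n_3}\ =\ \frac{|\mathcal{T}(G)|}{n_1}.$$
For the hitting number, deleting all edges between any two parts destroys every triangle, hence $\tau(G)\leq\min\{e_{12},e_{13},e_{23}\}$.

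It remains to bound $|\mathcal{T}(G)|$ from below. The natural first attempt is the Moon--Moser estimate $3|\mathcal{T}(G)|=\sum_{uv\in E(G)}|N(u)\cap N(v)|\geq\sum_v d(v)^2-mn\geq \tfrac{4m^2}{n}-mn$, giving $|\mathcal{T}(G)|\geq \tfrac{m(4m-n^2)}{3n}$, which is positive precisely because $m>n^2/4$; this is where the hypothesis enters. However, I expect this bound to be too weak once the parts are unbalanced, so I would instead use a sharper, structure-aware estimate: counting, for each apex $w\in I_1$, the edges of $G$ between $N(w)\cap I_2$ and $N(w)\cap I_3$ and discarding those incident to non-neighbours of $w$, one obtains
$$|\mathcal{T}(G)|\ \geq\ n_1e_{23}+n_2e_{13}+n_3e_{12}-2n_1n_2n_3,$$
which is tight for the complete tripartite graph.

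Combining the three ingredients, the theorem reduces to the single inequality
$$n^2\bigl(n_1e_{23}+n_2e_{13}+n_3e_{12}-2n_1n_2n_3\bigr)\ \geq\ 3(4m-n^2)\,n_1\min\{e_{12},e_{13},e_{23}\},$$
to be verified over all admissible data, i.e.\ subject to $0\leq e_{ij}\leq n_in_j$ and $4m>n^2$.

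I expect this final inequality — quadratic in the edge counts $e_{ij}$ — to be the main obstacle. By fixing which class attains the minimum and exploiting concavity in that variable one should be able to push the verification to extreme configurations (each $e_{ij}\in\{0,n_in_j\}$), but the case analysis, together with confirming that the triangle estimate stays large enough exactly in the unbalanced regime that defeats Moon--Moser, makes the bookkeeping delicate. A useful guide for the calculation is that equality should hold throughout for the balanced complete tripartite graph $K_{n/3,n/3,n/3}$, where $\tau=\nu=(n/3)^2$ and the claimed ratio equals $1$; this is what pins down the constant $\tfrac{n^2}{3(4m-n^2)}$ and confirms the bound cannot be improved by this method.
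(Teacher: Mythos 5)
Your first two ingredients are sound and in fact coincide with the paper's: the probabilistic comparison with the complete tripartite graph (Lemma~\ref{lemma:main}, with $\Pi'$ the part-preserving permutations and the count $(n_1-1)!(n_2-1)!(n_3-1)!$) together with Lemma~\ref{lemma:extendpacking} and Proposition~\ref{prop:Konig} gives exactly $\nu(G)\geq |\mathcal{T}(G)|/n_1$, which is the paper's inequality, and $\tau(G)\leq\min\{e_{12},e_{13},e_{23}\}$ refines the paper's cruder $\tau(G)\leq n_2n_3$. You also correctly diagnosed that the Moon--Moser/Nordhaus--Stewart bound $|\mathcal{T}(G)|\geq \frac{m(4m-n^2)}{3n}$ is insufficient: plugging it in only yields the theorem when $m\geq n^2/3$, leaving the window $\frac{n^2}{4}<m<\frac{n^2}{3}$ open. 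But at precisely this point your proposal stops being a proof. You replace the missing triangle bound by the structure-aware estimate $|\mathcal{T}(G)|\geq n_1e_{23}+n_2e_{13}+n_3e_{12}-2n_1n_2n_3$ (which is correct, by your apex-counting argument, and tight for complete tripartite graphs), and then reduce the theorem to the multivariable inequality
\begin{equation*}
n^2\bigl(n_1e_{23}+n_2e_{13}+n_3e_{12}-2n_1n_2n_3\bigr)\;\geq\;3(4m-n^2)\,n_1\min\{e_{12},e_{13},e_{23}\},
\end{equation*}
which you explicitly leave unverified, calling the bookkeeping ``delicate.'' That is the entire content of the theorem in the hard regime: all the difficulty has been compressed into this one inequality, and sketching a strategy (``fix which class attains the minimum, push to extreme configurations'') is not a proof of it. Note also that it is not even obvious your triangle bound stays positive for all admissible data with $m>n^2/4$, so the reduction needs care before the optimization even begins.

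The paper avoids all of this with one citation: Bollob\'as's bound $|\mathcal{T}(G)|\geq \frac{n}{9}(4m-n^2)$ ({\cite[Corollary 6.1.9]{Bollobas78}}), valid for every graph and strictly stronger than Moon--Moser exactly in the window $m<n^2/3$ that defeats your first attempt. Combined with the crude $\tau(G)\leq n_2n_3$ and AM--GM in the form $n_1n_2n_3\leq n^3/27$, it gives
$\frac{\tau(G)}{\nu(G)}\leq \frac{9\,n_1n_2n_3}{(4m-n^2)n}\leq \frac{n^2}{3(4m-n^2)}$
in one line. Your closing sanity check (equality at $K_{n/3,n/3,n/3}$, where the ratio bound equals $1$) is consistent with this, but to complete your version you would either have to prove your reduced inequality in full, or simply substitute the citable triangle bound, at which point your sharper $\tau(G)\leq\min\{e_{12},e_{13},e_{23}\}$ is no longer needed.
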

\begin{proof}

Let~$G' = (I_1,I_2,I_3,E(G'))$ be a complete tripartite graph
with~$|I_1| \geq |I_2| \geq |I_3|$.
 Let~$\Pi^{'}$ be the maximal subset of $\mathfrak{S}(G^{'})$ such that, for each~$\pi \in \Pi^{'}$,~$\pi(v) 
 \in I_i$ if and only if~$v \in I_i$, for each~$1\leq i \leq 3$. Let~$P'$ be a maximum packing in~$G'$.
Let us suppose without loss of generality that~$|I_1| \geq |I_2| \geq |I_3|$.
As $G[I_2 \cup I_3]$ is bipartite, we have
$\chi'(G[I_2 \cup I_3])=|I_2|$ by Proposition \ref{prop:Konig}.
Thus, by Lemma \ref{lemma:extendpacking}, with $X=I_2 \cup I_3$ and $Y=I_1$, we have
$|P'| \geq |I_2||I_3|$. 
 
Note that, for a fixed~$(t,t') \in \mathcal{T}(G) \times P'$, there are exactly~$(|I_1|-1)!\cdot (|I_2|-1)! \cdot (|I_3|-1)!$ permutations~$\pi \in \Pi^{'}$ such that~$t =  \pi(t')$. Thus, by Lemma \ref{lemma:main}, we have~$\nu(G) \geq 
\frac{|\mathcal{T}(G)| \cdot |P'|}{|I_1|\cdot|I_2|\cdot|I_3|} \geq \frac{|\mathcal{T}(G)|}{|I_1|}$.
By Bollobás {\cite[Corollary 6.1.9]{Bollobas78}}(see also {\cite[Figure 1]{Fisher89}}),~$|\mathcal{T}(G)| \geq \frac{n}{9} \cdot (4m - n^2)$. So,~$\nu(G) \geq \frac{|\mathcal{T}(G)|}{|I_1|} \geq \frac{(4m − n^2)n}{9|I_1|}$. Now, as~$\tau(G) \leq |I_2|\cdot |I_3|$, we have that
\begin{eqnarray}
\frac{\tau(G)}{\nu(G)} \leq  \frac{9 |I_1|\cdot |I_2|\cdot |I_3|}
{(4m-n^2)n} \leq
\frac{\frac{n^3}
{3}}
{(4m-n^2)n} = \frac{1}
{3} \cdot \frac{n^2}
{(4m-n^2)} \nonumber,
\end{eqnarray}
and the proof follows.\\
\end{proof}
As stated in the introduction of this section, given a tripartite graph~$G$, the best known upper bound for~$\frac{\tau(G)}{\nu(G)}$ is~$\frac{28}{15} \approx 1.87$. The following corollary shows that this bound is improved if~$G$ is dense enough.
\begin{corollary}
For any~$\alpha>0$, every tripartite graph~$G$ with more than~$(\frac{1 + 3\alpha}{12\alpha})n^2$ edges satisfies~$\tau(G) < \alpha \nu(G)$. In particular, if~$G$ has more than~$\frac{33n^2}{112}$ edges then~$\tau(G) < \frac{28}{15}\nu(G)$.
\end{corollary}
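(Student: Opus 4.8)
The plan is to derive this corollary directly from Theorem~\ref{thm:3partite}. The theorem gives that for every tripartite graph $G$ on $n$ vertices with $m > \frac{n^2}{4}$ edges, we have $\frac{\tau(G)}{\nu(G)} \leq \frac{1}{3}\cdot\frac{n^2}{4m-n^2}$. So the entire task reduces to algebra: find the threshold on $m$ that forces the right-hand side below a prescribed target $\alpha$.

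First I would set up the inequality $\frac{1}{3}\cdot\frac{n^2}{4m-n^2} < \alpha$ and solve it for $m$. Clearing denominators (noting $4m - n^2 > 0$ whenever $m > \frac{n^2}{4}$, so the inequality is safe) gives $n^2 < 3\alpha(4m-n^2)$, i.e. $n^2 + 3\alpha n^2 < 12\alpha m$, which rearranges to $m > \frac{(1+3\alpha)n^2}{12\alpha}$. This is exactly the stated hypothesis, so any $G$ meeting it satisfies $\tau(G) < \alpha\nu(G)$. I should also check that this threshold automatically exceeds $\frac{n^2}{4}$ so that Theorem~\ref{thm:3partite} applies: since $\frac{1+3\alpha}{12\alpha} - \frac14 = \frac{1+3\alpha - 3\alpha}{12\alpha} = \frac{1}{12\alpha} > 0$, the threshold is always strictly larger than $\frac{n^2}{4}$, so the theorem's hypothesis is met and the argument is valid for every $\alpha > 0$.

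For the concrete instance I would substitute $\alpha = \frac{28}{15}$ into $\frac{(1+3\alpha)n^2}{12\alpha}$. Computing $1 + 3\cdot\frac{28}{15} = 1 + \frac{84}{15} = \frac{99}{15}$ and $12\cdot\frac{28}{15} = \frac{336}{15}$, the ratio becomes $\frac{99}{336} = \frac{33}{112}$, yielding the threshold $\frac{33n^2}{112}$ as claimed. The general statement then specializes correctly.

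There is essentially no hard part here; this is a purely computational corollary of the theorem. The only thing worth being careful about is the direction of the inequality when clearing the denominator $4m - n^2$, which is why I explicitly record that $m > \frac{n^2}{4}$ keeps it positive, and the brief verification that the hypothesis threshold dominates $\frac{n^2}{4}$ so that the theorem is actually applicable.
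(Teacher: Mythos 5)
Your proposal is correct and is precisely the argument the paper intends (the paper states this corollary without proof, as it is exactly this algebraic specialization of Theorem~\ref{thm:3partite}): solving $\frac{n^2}{3(4m-n^2)}<\alpha$ for $m$, verifying the threshold $\frac{(1+3\alpha)n^2}{12\alpha}$ exceeds $\frac{n^2}{4}$, and substituting $\alpha=\frac{28}{15}$ to obtain $\frac{33n^2}{112}$ are all carried out correctly. The only micro-point you might add is that the strict inequality $\tau(G)<\alpha\,\nu(G)$ requires $\nu(G)\geq 1$, which indeed holds because $m>\frac{n^2}{4}$ forces $|\mathcal{T}(G)|\geq\frac{n}{9}(4m-n^2)>0$.
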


\section{Complete 4-Partite Graphs}
\label{section:tuza-complete-4partite}

Aparna~\etal~showed that Tuza's Conjecture holds for
4-partite graphs~\cite[Corollary 7]{Aparna11}.
In this section, we improve this result for complete 4-partite graphs (Theorem \ref{theorem:tuza-4-partite-complete}).
We begin by stating an auxiliary result, known as the Ore–Ryser Theorem.
Given a graph~$G$ and a function~$f:V(G) \rightarrow \mathbb{Z}^{+}$, an~$f$-factor
is a spanning subgraph~$H$ of~$G$ such that~$d_H (v) = f (v)$ for every~$v \in V (G)$.

\begin{proposition}\cite[Theorem 1]{Vandenbussche2013}
\label{prop:Vandenbussche}
Let~$G=(C,D,E(G))$ be a bipartite graph.
Let $f:V(G) \rightarrow \mathbb{Z}^{+}$.
$G$ has an
$f$-factor if and only if $f(C) = f(D)$ and,
for all 
$D' \subseteq D$,
$$
f(D') \leq \sum_{y \in N(D')} \min\{f(y), |N(y)\cap D'|\}.
$$
\end{proposition}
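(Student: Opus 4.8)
The plan is to prove the theorem by modelling a (simple) $f$-factor of the bipartite graph $G=(C,D,E(G))$ as an integral $s$--$t$ flow and invoking the max-flow/min-cut theorem together with the integrality of optimal flows on integer-capacitated networks. Throughout write $f(X):=\sum_{v\in X}f(v)$. The necessity of the two conditions is the routine direction, and I would dispatch it first by double counting. If $H$ is an $f$-factor, then $|E(H)|=\sum_{x\in D}d_H(x)=f(D)$ and likewise $|E(H)|=f(C)$, giving $f(C)=f(D)$. Fixing $D'\subseteq D$, every edge of $H$ incident to $D'$ has its other endpoint in $N(D')$, so $f(D')=\sum_{x\in D'}d_H(x)$ equals the number of such edges; grouping them by their endpoint $y\in N(D')$, the number incident to a fixed $y$ is at most $d_H(y)=f(y)$ and at most the number $|N(y)\cap D'|$ of admissible neighbours, hence at most $\min\{f(y),|N(y)\cap D'|\}$, and summation yields the displayed inequality.

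For sufficiency I would build the following network $\mathcal N$ with a source $s$ and a sink $t$: an arc $s\to x$ of capacity $f(x)$ for each $x\in D$, an arc $y\to t$ of capacity $f(y)$ for each $y\in C$, and an arc $x\to y$ of capacity $1$ for each edge $xy\in E(G)$. The capacity-$1$ middle arcs are exactly what forces the factor to be \emph{simple}. The key correspondence is that $G$ has an $f$-factor if and only if $\mathcal N$ admits a flow of value $f(D)$: an integral flow of this value must saturate every source arc (their total capacity is $f(D)$) and, since $f(C)=f(D)$, every sink arc as well, so the middle arcs carrying flow $1$ form a spanning subgraph in which each $x\in D$ and each $y\in C$ has degree exactly $f(x)$, respectively $f(y)$; conversely an $f$-factor yields such a flow. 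Because all capacities are integers, a maximum flow can be taken integral, so it suffices to prove that the maximum flow value equals $f(D)$.

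The heart of the argument is then the min-cut computation. By max-flow/min-cut it is enough to show every $s$--$t$ cut has capacity at least $f(D)$. I would parametrise a cut by the sets $B:=S\cap D$ and $A:=S\cap C$ lying on the source side $S$; its capacity is $f(D)-f(B)+f(A)+e(B,C\setminus A)$, where $e(B,C\setminus A)$ counts the middle arcs crossing the cut. Requiring this to be at least $f(D)$ is equivalent to $f(B)\le f(A)+e(B,C\setminus A)$ for all $A,B$. For a fixed $B$ the right-hand side is minimised by choosing, independently for each $y\in C$, whether to place $y$ in $A$ (cost $f(y)$) or outside $A$ (cost $|N(y)\cap B|$), so the minimum over $A$ equals $\sum_{y\in N(B)}\min\{f(y),|N(y)\cap B|\}$. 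Thus the min cut is at least $f(D)$ precisely when $f(B)\le\sum_{y\in N(B)}\min\{f(y),|N(y)\cap B|\}$ holds for every $B\subseteq D$, which is exactly the hypothesised condition (with $B=D'$). I expect this last reduction --- recognising that optimising the generic cut over the $C$-side collapses the cut inequality into the stated per-vertex minimum --- to be the main technical point; everything else is the standard flow dictionary plus integrality.
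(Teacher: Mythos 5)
The paper offers no proof of this proposition at all: it is imported verbatim as a black box from \cite[Theorem 1]{Vandenbussche2013} (it is a form of the classical Ore--Ryser theorem on $f$-factors of bipartite graphs), so there is no internal argument to compare yours against. Your blind proof is correct and self-contained, which is strictly more than the paper provides. The necessity direction by double counting is right: edges of an $f$-factor $H$ incident to $D'$ number $f(D')$, and grouping by the $C$-endpoint $y$ bounds each group by $\min\{f(y),|N(y)\cap D'|\}$. The network you build (source arcs $s\to x$ of capacity $f(x)$, unit middle arcs encoding simplicity, sink arcs $y\to t$ of capacity $f(y)$) gives the correct dictionary: an integral flow of value $f(D)$ saturates all source arcs, and the hypothesis $f(C)=f(D)$ forces all sink arcs to be saturated too, so the unit arcs carrying flow form an $f$-factor; integrality of max flow on integer capacities closes the loop. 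Your cut computation is also right: a cut with source side containing $B\subseteq D$ and $A\subseteq C$ has capacity $f(D)-f(B)+f(A)+e(B,C\setminus A)$, and since each $y\in C$ contributes independently either $f(y)$ or $|N(y)\cap B|$, the minimum over $A$ is $\sum_{y\in C}\min\{f(y),|N(y)\cap B|\}$, which equals $\sum_{y\in N(B)}\min\{f(y),|N(y)\cap B|\}$ because $y\notin N(B)$ contributes $\min\{f(y),0\}=0$; together with the trivial cut $\{s\}$ of capacity $f(D)$, max-flow/min-cut yields exactly the stated condition. This flow-based route is the standard modern derivation; the classical proofs (Ore, Ryser) proceed instead via alternating-path augmentation or Gale--Ryser-type matrix arguments, but for the purposes of this paper either suffices, since the proposition is used only as an existence tool for the $f$-factor $G'_{CD}$ in Theorem \ref{theorem:tuza-4-partite-complete}.
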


We now prove the main theorem of this section.

\begin{theorem}\label{theorem:tuza-4-partite-complete}
For every complete 4-partite graph~$G$ on at least five vertices,~${\tau(G) \leq \frac{3}{2}\nu(G)}$.
Moreover, this bound it tight.
\end{theorem}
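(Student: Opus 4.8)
The plan is to order the parts as $A_1,A_2,A_3,A_4$ with sizes $a_1\ge a_2\ge a_3\ge a_4\ge 1$ and to bound $\tau$ and $\nu$ separately, comparing them according to whether one part dominates the other three. First I would bound $\tau$ from above by explicit hitting sets coming from bipartitions of the four parts into two groups: deleting the edges inside each group leaves a complete bipartite, hence triangle-free, graph. Pairing the largest part with the smallest gives the hitting set $R:=E(A_1,A_4)\cup E(A_2,A_3)$, so $\tau\le a_1a_4+a_2a_3$; isolating $A_1$ gives the alternative bound $\tau\le a_2a_3+a_2a_4+a_3a_4$. A short rearrangement argument ($(a_1-a_3)(a_2-a_4)\ge 0$, etc.) shows the first bound is the smaller one exactly when $a_1\le a_2+a_3$.

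The key structural observation is that every triangle of $G$ contains \emph{at most one} edge of $R$: a triangle meets only three of the four parts, so it cannot simultaneously contain an $E(A_1,A_4)$-edge (which needs parts $1$ and $4$) and an $E(A_2,A_3)$-edge (which needs parts $2$ and $3$), and it cannot contain two edges of the same block. Hence in any edge-disjoint triangle packing the number of triangles is at least the number of $R$-edges they cover. In the balanced regime $a_1\le a_2+a_3$ this reduces the whole problem to constructing an edge-disjoint triangle packing covering at least $\tfrac23|R|=\tfrac23(a_1a_4+a_2a_3)$ of the edges of $R$: such a packing gives $\nu\ge\tfrac23|R|$, and together with $\tau\le|R|$ it yields $\tfrac32\nu\ge|R|\ge\tau$.

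To build this packing I would cover the $E(A_1,A_4)$-edges by triangles with apex in $A_2\cup A_3$ and the $E(A_2,A_3)$-edges by triangles with apex in $A_1\cup A_4$, in the spirit of Lemma~\ref{lemma:extendpacking}. Since $a_2+a_3\ge a_1=\max(a_1,a_4)=\chi'(K_{a_1,a_4})$ by König (Proposition~\ref{prop:Konig}), and likewise $a_1+a_4\ge a_2=\chi'(K_{a_2,a_3})$, each family is realizable in isolation; the difficulty is that both families draw their non-$R$ edges from the same four cross-blocks $E(A_1,A_2),E(A_1,A_3),E(A_2,A_4),E(A_3,A_4)$, so they compete for these edges. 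I would encode a conflict-free simultaneous assignment as an $f$-factor in the complete bipartite graph between $A_1\cup A_4$ and $A_2\cup A_3$, with $f$ prescribing how many cross-edges each vertex contributes to each family, and invoke the Ore–Ryser criterion (Proposition~\ref{prop:Vandenbussche}) to certify its existence. \textbf{This $f$-factor step is the main obstacle}: the demand $2(a_1a_4+a_2a_3)$ meets the supply $(a_1+a_4)(a_2+a_3)$ with equality in the fully balanced case, so the per-vertex degree conditions of Ore–Ryser are genuinely tight; when they fail, the deficiency must be bounded and shown to still leave at least $\tfrac23|R|$ covered edges.

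The dominant regime $a_1\ge a_2+a_3+1$ is easier: here I would use $\tau\le a_2a_3+a_2a_4+a_3a_4=|E(G[A_2\cup A_3\cup A_4])|$ and apply Lemma~\ref{lemma:extendpacking} with $X=A_2\cup A_3\cup A_4$ and $Y=A_1$; since $\chi'(G[X])\le a_2+a_3+1\le a_1=|Y|$ by Vizing (Proposition~\ref{prop:tuza-vizing}), the lemma gives a packing of size $|E(G[X])|\ge\tau$, so in fact $\tau\le\nu$. Finally, for tightness I would take the complete $4$-partite graph with part sizes $(2,1,1,1)$, i.e.\ $K_5$ minus an edge: one checks directly that $\nu=2$ (two odd-degree vertices preclude a triangle decomposition, while two edge-disjoint triangles exist) and $\tau=3$ (the three edges among the singleton parts form a hitting set, and a short case check shows two edges never suffice), giving $\tau=\tfrac32\nu$. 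A few small or parity-exceptional instances in the balanced regime may likewise need to be verified by hand, analogous to the small-$n$ checks used for split graphs.
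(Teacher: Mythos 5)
Your outer skeleton matches the paper's: the dominant regime $a_1\geq a_2+a_3+1$ is handled exactly as in the paper (Vizing plus Lemma~\ref{lemma:extendpacking} with $Y=A_1$ gives $\nu\geq a_2a_3+a_2a_4+a_3a_4\geq\tau$), the hitting set $R$ with $\tau\leq a_1a_4+a_2a_3$ is the same bound the paper uses throughout, your observation that a triangle contains at most one edge of $R$ is correct, and your tightness verification on $(2,1,1,1)$ is sound and slightly more detailed than the paper's. But in the balanced regime $a_1\leq a_2+a_3$ — which is the entire substance of the theorem — your argument rests on constructing a packing of edge-disjoint triangles, each through a distinct $R$-edge, covering at least $\frac{2}{3}|R|$ of $R$, and this construction is never carried out. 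You flag the $f$-factor step yourself as ``the main obstacle,'' which is an accurate self-diagnosis: that step \emph{is} the theorem. Two concrete problems. First, Proposition~\ref{prop:Vandenbussche} certifies only prescribed degrees; your conflict-free assignment needs more than degrees, since the cross-edges consumed at an apex $z$ must pair up into triangles whose $R$-edges form a matching for each apex (otherwise edge-disjointness fails), and a single $f$-factor between $A_1\cup A_4$ and $A_2\cup A_3$ does not encode this pairing structure. Second, even granting a correct formulation, the tight counting you identify (demand $2(a_1a_4+a_2a_3)$ versus supply $(a_1+a_4)(a_2+a_3)$, with surplus $(a_1-a_3)(a_2-a_4)+(a_1-a_2)(a_3-a_4)$, which vanishes when all parts are equal) means the deficiency analysis cannot be waved at — and you give no argument that the invariant $\nu\geq\frac{2}{3}|R|$ is achievable across the whole regime, nor which ``parity-exceptional instances'' arise.

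For comparison, the paper does not attempt any such simultaneous covering. It splits the balanced regime into $a>c+d$ and $a\leq c+d$; uses Ore--Ryser only in a mild auxiliary role (to build an $f$-factor $G'_{CD}$ of $G[C\cup D]$ with degree $a-b-1$ on $D$, so that the union $G[B\cup D]\cup G[B\cup C]\cup G'_{CD}$ has $\Delta\leq a-1$ and Vizing yields $\chi'\leq a$, unlocking Lemma~\ref{lemma:extendpacking} with apexes in $A$); and in the case $a\leq c+d$ it combines three separate packing lower bounds (Claims~\ref{claim:nu(G)geqfracab+c+1bc+bd+cd}, \ref{claim:nu(G)geqab+fracc+d-a4}, and \ref{claim:nu(G)geqfracab+cbc+bd+cd}, the last needing the chromatic-index refinement $\chi'=\Delta$ when the max-degree vertices induce a forest) with careful algebra and integrality arguments through subcases $a\geq b+1$, $a=b\geq c+1$, $a=b=c=d$, and $a=b=c\neq d$. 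The length and delicacy of that case analysis is evidence that your missing step is not a routine fix: as submitted, your proposal is a plausible plan with a genuine gap at its center, not a proof.
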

\begin{proof}
Let~$G=(A,B,C,D,E(G))$, with~$a:=|A|,b:=|B|, c:=|C|, d:=|D|$
and~$a  \geq b \geq c \geq d$.
As~$|V(G)| \geq 5$, we have~$a\geq 2$.
For every~$P$,~$Q \in \{A,B,C,D\}$, we say that 
the~$PQ$ \textit{edges} are the edges of~$G$ with one end
in~$P$ and the other in~$Q$.

Suppose for a moment that~$a\geq b+c+1$.
Let~$G'=G[B \cup C \cup D]$.
Note that, by Proposition~\ref{prop:tuza-vizing},~$\chi'(G') \leq b+c+1 \leq a$.
Thus, by Lemma \ref{lemma:extendpacking},~$\nu(G)\geq |E(G')| = bc+cd+bd$.
Also, the set of~$BC$ edges joined
to the set of~$CD$ and~$BD$ edges form a hitting set of~$G$, with cardinality~$bc+cd+bd$.
Thus~$\tau(G)\leq  bc+cd+bd  \leq \nu(G)$
and the proof follows. Hence,
from now on,
we may assume that~$a\leq b+c$.
We divide the rest of the proof on whether~$a>c+d$ or not.
For the rest of the proof, we observe that the set of~$BC$ edges joined to the set of~$AD$ form a hitting set. Hence,
\begin{equation*}\label{eq:tauleqad+bc}
\tau(G)\leq ad+bc.
\end{equation*}

\setcounter{case}{0}
\begin{case}~$a>c+d$.

First suppose that~$a\geq b+1$.
We define a function $f:V(G[C \cup D]) \rightarrow \mathbb{Z}^{+}$ as follows.
For every $v \in D$, we set $f(v)=a-b-1$,
and for every $v \in C$, we set
$f(v)$ to either $\lceil \frac{d}{c}(a-b-1) \rceil$ or $\lfloor \frac{d}{c}(a-b-1) \rfloor$
such that $f(C)=f(D)$.
Let $D' \subseteq D$. Note that
$f(D')=(a-b-1)|D'|$.
If $|D'| \leq \lfloor \frac{d}{c}(a-b-1) \rfloor$,
then
$\sum_{y \in N(D')} \min\{f(y), |N(y) \cap D'|\}=
\sum_{y \in N(D')} |D'|=|C||D'| \geq (a-b-1)|D'|=f(D')$.
Otherwise,
$\sum_{y \in N(D')} \min\{f(y), |N(y) \cap D'|\}=
\sum_{y \in N(D')} f(y)=f(C)=f(D) \geq f(D')$.
Hence, by Proposition \ref{prop:Vandenbussche},
there exists an $f$-factor of $G[C \cup D]$, say
$G'_{CD}$.

Let~$G'=G[B \cup D] \cup G[B \cup C] \cup G'_{CD}$.
Observe that~$\Delta(G') \leq a-1$.
Indeed, if~$v\in B$, then~$d_{G'}(v)=c+d \leq a - 1$;
if~$v\in C \cup D$, then~$d_{G'}(v) \leq b+(a-b-1) = a - 1$.
Hence, as~$\chi'(G')\leq \Delta(G')+1 \leq a$
by Proposition~\ref{prop:Konig}, we have that~$\nu(G) \geq |E(G')| = bc+bd+(a-b-1)d=ad+bc-d$ by Lemma \ref{lemma:extendpacking}.
As~$a\geq 2$, we have that~$3d \leq ad+bc$.
Hence,
$$
\tau(G) \leq ad+bc
        \leq \frac{ad+bc}{ad+bc-d} \cdot \nu(G)
        \leq \frac{3}{2} \nu(G)
.$$

Now, let us suppose that~$a \leq b+1$.
Let~$G'=G[B \cup D] \cup G[B \cup C]$
and note that, as $G'$ is bipartite,~$\chi'(G')\leq \max\{b,c+d\} \leq a$ by Proposition \ref{prop:Konig}.
Thus, by Lemma~\ref{lemma:extendpacking},
$\nu(G) \geq |E(G')| = bc+bd$, and
$$
\tau(G) \leq ad+bc
        \leq \frac{ad+bc}{bc+bd} \cdot \nu(G)
        \leq \frac{bd+d+bc}{bc+bd} \cdot \nu(G)
         \leq \frac{3}{2} \nu(G)
.$$
This finishes the proof of Case 1.
\end{case}
\begin{figure}[h]
    \centering
    \begin{tikzpicture}[scale=.72]
        \def\radius{1.5cm}
        \foreach \i/\label in {
    1/{$B$},
    2/{$A$},
    3/{$C^{''}$},
    4/{$C^{'}$},
    5/{$D^{''}$},
    6/{$D^{'}$}
} {
            \ifnum\i<3
                \coordinate (v\i) at ({\i*60}:3.5); 
            \draw (v\i) circle (\radius);
            \node at ({\i*60}:5.5) {\label};
                \foreach \j in {1,2,3,4} {
                    \pgfmathsetmacro{\y}{(\j+1)*0.4} 
                    \coordinate (p\i\j) at ($(v\i) + (90:\radius) + (0,-\y)$);
                    \fill (p\i\j) circle (2pt); 
                }
            \else
                \ifnum\i=3
                  \coordinate (v\i) at ({\i*58}:4); 
            \draw (v\i) circle (\radius*0.8);
            \node at ({\i*58}:5.7) {\label};
                  \foreach \j in {1,2} {
                    \pgfmathsetmacro{\y}{(\j+1)*0.6} 
                    \coordinate (p\i\j) at ($(v\i) + (90:\radius) + (0,-\y)$);
                    \node[anchor=east] at (p\i\j) {$u_{\pgfmathparse{int(\j+2)}\pgfmathresult}$};
                    \fill (p\i\j) circle (2pt); 
                 }
                \fi
                \ifnum\i=6
                  \coordinate (v\i) at ({\i*61}:4); 
            \draw (v\i) circle (\radius*0.8);
            \node at ({\i*61}:5.7) {\label};
                  \foreach \j in {1,2} {
                    \pgfmathsetmacro{\y}{(\j +1)*0.6} 
                    \coordinate (p\i\j) at ($(v\i) + (90:\radius) + (0,-\y)$);
                    \node[anchor=west] at (p\i\j) {$v_{\pgfmathparse{int(\j+1)}\pgfmathresult}$};
                    \fill (p\i\j) circle (2pt); 
                 }
                \fi
                \ifnum\i=4
                  \coordinate (v\i) at ({\i*56}:2.8); 
            \draw (v\i) circle (\radius*0.8);
            \node at ({\i*56}:4.5) {\label};
                  \foreach \j in {1,2} {
                    \pgfmathsetmacro{\y}{(\j+1)*0.6} 
                    \coordinate (p\i\j) at ($(v\i) + (90:\radius) + (0,-\y)$);
                    \node[anchor=east] at (p\i\j) {$u_{\pgfmathparse{int(\j)}\pgfmathresult}$};
                    \fill (p\i\j) circle (2pt); 
                 }
                \fi
                \fi
                \ifnum\i=5
                  \coordinate (v\i) at ({\i*64}:3); 
            \draw (v\i) circle (\radius*0.6);
            \node at ({\i*64}:4.5) {\label};
                  \foreach \j in {1} {
                    \pgfmathsetmacro{\y}{(\j+3)*0.4} 
                    \coordinate (p\i\j) at ($(v\i) + (90:\radius) + (0,-\y)$);
                    \node[anchor=west] at (p\i\j) {$v_{1}$};
                    \fill (p\i\j) circle (2pt); 
                 }
                \fi
         
        }
        \foreach \i in {1,2,3,4} {
            \foreach \j in {1,2,3,4} {
                \draw (p1\i) -- (p2\j);
            }
            \draw (p31) -- (p2\i);
            \draw (p32) -- (p2\i);
            \draw (p31) -- (p1\i);
            \draw (p32) -- (p1\i);
            \draw (p61) -- (p2\i);
            \draw (p62) -- (p2\i);
            \draw (p61) -- (p1\i);
            \draw (p62) -- (p1\i);

            \draw[dashed] (p42) -- (p51);
            \foreach \i in {4} {
             \draw[dashed] (p1\i) -- (p51);
            }
            \foreach \i in {4} {
             \draw[dashed] (p1\i) -- (p42);
            }
            
        }
        \draw[dashed] (p41) -- (p51);
        \draw[dashed] (p24) -- (p51);
        \draw[dashed] (p24) -- (p41);
    \end{tikzpicture}
    \caption{A complete 4-partite graph with \( a = 4 \), \( b = 4 \), \( c = 4 \), \( d = 3 \), and \( x = 2 \). Packing \( P \) is formed by the solid edges, and packing \( P' \) is formed by the dashed edges. 
}\label{fig:claimcase2}
\end{figure}
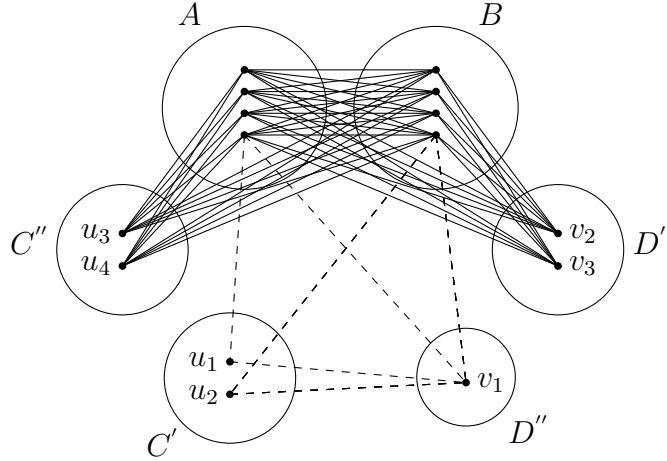
\newpage
\begin{case}~$a \leq c+d$.

For this case, we will use the next two claims.
Their proofs rely heavily on Lemma \ref{lemma:extendpacking}.

\begin{claim}\label{claim:nu(G)geqfracab+c+1bc+bd+cd}
$\nu(G) \geq \frac{a}{b+c+1}(bc+bd+cd)$.
\end{claim}
\begin{proof}
Consider the tripartite graph~$G'=G[B \cup C \cup D]$, we have that~$\chi'(G) \leq b+c+1$ by Proposition \ref{prop:tuza-vizing}.
Thus, by Lemma \ref{lemma:extendpacking}, with
$X=B \cup C \cup D$ and $Y=A$,
$\nu(G) \geq \frac{a}{b+c+1}(bc+bd+cd)$.
\end{proof}

\begin{claim}\label{claim:nu(G)geqab+fracc+d-a4}
$\nu(G) \geq ab + \frac{(c+d-a)^2-1}{4}$.
\end{claim}
\begin{proof}
Let~$x=\lfloor (c+d-a)/2 \rfloor$.
Set $C=\{u_1,u_2, \ldots , u_c\}$,
and let
$C'= \{u_1,u_2, \ldots u_x\}$
and $C''= C \setminus C'$.
Set $D=\{v_1,v_2, \ldots v_d\}$, and let
$D'= \{v_1,v_2, \ldots v_{a-c+x}\}$
and $D''= D \setminus D'$.
By Proposition \ref{prop:Konig}, $\chi'(G[A\cup B]) \leq a$.
Thus, by Lemma \ref{lemma:extendpacking},
with $X=A\cup B$ and $Y=C'' \cup D'$, there exists a packing $P$ in $G$ with size at least $ab$.

Let $G'$ be the subgraph of $G$ resulting by removing all edges between $A$ and $B$.
Consider now the bipartite graph~$G'[C'\cup A \cup B]$ and note that $\chi'(G'[C'\cup A \cup B]) \leq a+b$ by Proposition \ref{prop:Konig}.
Thus, by Lemma \ref{lemma:extendpacking},
with $G=G'$, $X=C'\cup A \cup B$
and $Y=D''$, there exists a packing $P'$ in $G'$ with size at least $\frac{|D''|}{a+b}|E(G'[C'\cup A \cup B])|=x(c+d-a-x)$ (Figure \ref{fig:claimcase2}).
As $P$ and $P'$ are disjoint to each other,
$\nu(G) \geq ab + x (c+d-a-x) = ab + \lfloor \frac{c+d-a}{2}\rfloor \lceil \frac{c+d-a}{2} \rceil \geq ab + \frac{(c+d-a)^2-1}{4}$.
\end{proof}
We now continue with the proof of Case 2. By Claim \ref{claim:nu(G)geqab+fracc+d-a4}, if~$d\leq b/2$, then~$\tau(G) \leq ad+bc \leq \frac{ab}{2} + ab \leq \frac{3}{2}\nu(G)$ and we are done.
So, from now on, we may assume that 
$b\leq 2d-1$, or equivalently, $d\geq \frac{b+1}{2}$.
Suppose for a moment that~$a \geq b+1$. Then, as~$d\geq \frac{b+1}{2}$, by Claim \ref{claim:nu(G)geqfracab+c+1bc+bd+cd}, \begin{eqnarray*}
3\nu(G)/2 - \tau(G)
&\geq&
\frac{3a(bc+bd+cd)}{2(b+c+1)}-ad-bc \\
&=&
a\cdot \frac{3bc+d(b+c-2)}{2(b+c+1)}-bc \\
&\geq&
(b+1)\cdot \frac{3bc+\frac{b+1}{2}\cdot (b+c-2)}{2(b+c+1)}-bc \\
&=&
\frac{6(b+1)bc+(b+1)^2(b+c-2)-4bc(b+c+1)}{4(b+c+1)} \\
&=&
\frac{(b+4c)(b^2-bc)+3b(c-1)+(b+1)c-2}{4(b+c+1)} \\
&\geq&
0,
\end{eqnarray*}
and the proof follows. Hence, from now on, we may assume that~$a=b$. Thus, by Claim~\ref{claim:nu(G)geqab+fracc+d-a4},
\begin{eqnarray*}
3\nu(G)/2- \tau(G)&\geq&
\frac{3}{2}(ab + \frac{(c+d-a)^2-1}{4})-ad-bc \\
&=&
\frac{3}{2}(b^2 + \frac{(c+d-b)^2-1}{4})-b(c+d) \\
&=&
\frac{15b^2 +3(c+d)^2-14b(c+d)-3}{8}\\
&=&
\frac{1}{8}
(5b-3(c+d))(3b-(c+d))-\frac{3}{8} \\
&\geq&
\frac{b}{8}
(5b-3(c+d))-\frac{3}{8}.
\end{eqnarray*}
Hence, if~$5b\geq 3c+3d$ we are done.
So, from now on, me may assume that
$5b < 3c+3d$, or equivalently, that $c+d > 5b/3$.
Note also that, as~$5b<3b+3d$, we have~$d >2b/3$.
Now, by Claim~\ref{claim:nu(G)geqfracab+c+1bc+bd+cd}, we have
\begin{eqnarray*}
3\nu(G)/2 - \tau(G) +1 
&\geq&
\frac{3a(bc+bd+cd)}{2(b+c+1)}-ad-bc+1 \\
&=&
\frac{3b(b(c+d)+cd)}{2(b+c+1)}-b(c+d) + 1\\
&=&
\frac{b(c+d)(b-2c-2)+3bcd+2(b+c+1)}{2(b+c+1)}\\
&>&
\frac{\frac{5b^2}{3}(b-2c-2)+3bc \cdot \frac{2b}{3}+2(b+c+1)}{2(b+c+1)}\\
&=&
\frac{b^2(5b-10)+6(b+1)-c(4b^2-6)}{6(b+c+1)}.
\end{eqnarray*}

Suppose for a moment that~$a=b\geq c+1$.
Then,
\begin{eqnarray*}
3\nu(G)/2 - \tau(G) +1
&\geq&
\frac{b^2(5b-10)+6(b+1)-(b-1)(4b^2-6)}{6(b+c+1)}\\
&=&
\frac{b^3-6b^2+12b}{6(b+c+1)} \\
&>&
0,
\end{eqnarray*}
and the proof follows. Hence, we may assume
that~$a=b=c$. 
If $a=b=c=d$, then, by Claim \ref{claim:nu(G)geqfracab+c+1bc+bd+cd},
$3\nu(G)/2 - \tau(G) \geq
 \frac{3}{2} \lceil \frac{3a^3}{2a+1}\rceil - 2a^2
 \geq -\frac{1}{2}
$
and we are done.

We finally, consider the case when $a=b=c$ and $a\neq d$.
For this, we need to strengthen Claim \ref{claim:nu(G)geqfracab+c+1bc+bd+cd} with the help of the next result.

\begin{proposition}\cite[Theorem 4]{Akbari2012}
\label{prop:chi(G)=Delta(G)}
Let $G$ be a graph. If all vertices of maximum degree induce a forest, then $\chi'(G)= \Delta(G)$.
\end{proposition}

\begin{claim}\label{claim:nu(G)geqfracab+cbc+bd+cd}
If $c \neq d$, then $\nu(G) \geq \frac{a}{b+c}(bc+bd+cd)$
\end{claim}
\begin{proof}
Consider the tripartite graph~$G'=G[B \cup C \cup D]$.
As all vertices in $D$ has degree $b+c$,
all vertices in $C$ has degree $b+d$,
and $c\neq d$, we have that all vertices of maximum degree in $G'$ are exactly the vertices in $D$, which induce a forest. 
Thus, by Proposition~\ref{prop:chi(G)=Delta(G)},
we have that~$\chi'(G) = b+c$.
Hence, by Lemma \ref{lemma:extendpacking}, with
$X=B \cup C \cup D$ and $Y=A$,
$\nu(G) \geq \frac{a}{b+c}(bc+bd+cd)$.
\end{proof}

Recall that $a=b=c \neq d$, and $d>2a/3$.
By Claim \ref{claim:nu(G)geqfracab+cbc+bd+cd},
\begin{eqnarray*}
3\nu(G)/2 - \tau(G)
&\geq&
\frac{3}{4}(a^2+2ad) -a^2-ad\\
&=&
\frac{1}{4}(2ad-a^2)\\
&>&
0.
\end{eqnarray*}
This concludes the proof of Case 2.
\end{case}

Finally, note that if $G$ is the complete 4-partite graph on 5 vertices, then $\tau(G)= \frac{3\nu(G)}{2}$.
This concludes the proof of the theorem.
\end{proof}

\section{Concluding remarks} \label{sec:remarks}

Although the directed version of Tuza's conjecture has been already solved by McDonald \etal~\cite{McDonald2020},
the undirected version remains hard to prove even for the case of split graphs.
In this paper, we progress towards this goal by showing
that Tuza's conjecture is valid for dense split graphs.
Our main technique use a probabilistic argument and we also use it to show a result for dense tripartite graphs.

For complete $4$-partite graphs, we obtain an improved result: we show that $\tau(G) \leq \frac{3}{2}\nu(G)$ for every complete 4-partite graph $G$ on at least 5 vertices. We note that this tight bound also exists for planar triangulantions \cite{Botler2020}, $K_4$-free planar graphs \cite{Haxell12}, and others subclasses of $K_4$-free graphs \cite{Munaro}~. An interesting question will be characterize the graphs in which this factor is attained tightly. Also, it will be interesting to find new classes of graphs that satisfy this upper bound.

We believe the techniques showed here can be extended to show new results for dense~$k$-partite graphs and other graph classes, as chordal graphs.

\bibliographystyle{plain}
\bibliography{bibliografia}

\end{document}